\theoremstyle{plain}
\newtheorem{thm}{Theorem}[section]
\newtheorem{lemma}[thm]{Lemma}
\newtheorem{corollary}[thm]{Corollary}
\newtheorem{prop}[thm]{Proposition}
\newtoks\prt
\theoremstyle{definition}
\newtheorem{remark}[thm]{Remark}
\newtheorem{definition}[thm]{Definition}
\def\eqn#1$$#2$${\begin{equation}\label#1#2\end{equation}}
\long\def\clrred#1\endred{{\color{red}#1}}
\long\def\clrmagenta#1\endmagenta{{\color{magenta}#1}}
\long\def\clrpurple#1\endpurple{{\color{purple}#1}}
\long\def\clrblue#1\endblue{{\color{blue}#1}}
\numberwithin{equation}{section}
\def\epsilon{\varepsilon}
\def\er{\mathbb R}
\def\ff{\varphi}
\def\rn{\er^n}
\def\cE{\mathcal E}
\def\cF{\mathcal F}
\def\cL{\mathcal L}
\def\loc{{\rm loc}}
\def\adj{\operatorname{adj}}
\def\det{\operatorname{det}}
\def\osc{\operatorname{osc}}
\def\deg{\operatorname{deg}}
\def\dist{\operatorname{dist}}
\def\dx{\,dx}
\def\dy{\,dy}
\def\INV{\mathrm{(INV)}}
\def\N{\mathrm{(N)}}
\title[Differentiability of limits of homeomorphisms]{Differentiability almost everywhere of weak limits of bi-Sobolev homeomorphisms}
\author[A. Dole\v{z}alov\'a]{Anna Dole\v{z}alov\'a}
\address{Department of Mathematical Analysis, Charles University,
So\-ko\-lovsk\'a 83, 186~00 Prague 8, Czech Republic}
\email{\tt dolezalova@karlin.mff.cuni.cz}
\author[A. Molchanova]{Anastasia Molchanova}
\address{Faculty of Mathematics, University of Vienna,
Oskar-Morgenstern-Platz 1, A-1090 Vienna, Austria}
\email{\tt anastasia.molchanova@univie.ac.at}
\keywords{limits of Sobolev homeomorphisms, differentiability} 
\subjclass[2010]{46E35}
\thanks{
The first author was supported by Charles University Research program No. UNCE/SCI/023, by the grant GA\v{C}R P201/21-01976S and by the project Grant Schemes at CU, reg. no.
CZ.02.2.69/0.0/0.0/19 073/0016935. 
The second author was supported by the European Unions Horizon 2020 research and innovation programme under the Marie Sk\l adowska-Curie grant agreement No 847693. }
\begin{document}

\maketitle

\epigraph{\textit{\footnotesize{
[Reshetnyak's] synthesis of classical function theory and Sobolev
function classes was so fruitful that it was given a special name: quasiconformal
analysis.}}}{\footnotesize{A.\,D.\,Aleksandrov, 1999 Russ.\,Math.\,Surv.\,54 1069}
\vspace{1\baselineskip}}

\begin{abstract}
    This paper investigates the differentiability of weak limits of bi-Sobolev homeomorphisms. 
    Given $p>n-1$, consider a sequence of homeomorphisms $f_k$ with positive Jacobians $J_{f_k} >0$ almost everywhere and 
    $\sup_k(\|f_{k}\|_{W^{1,n-1}} + \|f_{k}^{-1}\|_{W^{1,p}}) <\infty$.
    We prove that if $f$ and $h$ are weak limits of $f_k$ and $f_k^{-1}$, respectively, with positive Jacobians $J_f>0$ and $J_h>0$ a.e., then $h(f(x))=x$ and $f(h(y))=y$ both hold a.e.\ and $f$ and $h$ are 
    differentiable almost everywhere.
\end{abstract}

\section{Introduction}

Let $\Omega$ and $\Omega'$ be domains, i.e.\ non-empty connected open sets, in $\rn$ and $f\in W^{1,p}(\Omega,\rn)$ be a mapping from $\Omega$ to $\Omega'$.
According to classic results of Geometric analysis, 
if $p>n$, the mapping $f$ is differentiable almost everywhere. 
This result was established in 1941 for $n=2$ by Cesari \cite{Ces1941} and later generalized to arbitrary $n$ by Calder\'{o}n \cite{Cal1951}.
The a.e.-differentiability of continuous and monotone mappings was studied from a geometrical perspective by V\"ais\"al\"a \cite{Vai1965-1} and Reshetnyak \cite{Res1966, Resh1967-2, Res1968}. 
This includes mappings with bounded distortion, also known as quasiregular mappings, and mappings with finite distortion (even for $p=n$). Further details on these results can be found in \cite{Resh1967-2, Vai1965-1}.
The results also extend to
$W^{1,1}$-homeomorphisms in dimension $n=2$, as shown by Gehring and Lehto
\cite{GehLeh1959},
and $W^{1,p}$-homeomorphisms with $p>n-1$ if $n\geq 3$, see V\"ais\"al\"a \cite{Vai1965-1} (also Onninen \cite[Theorem 1.2 and Example 1.3]{Onn2000}).

For $W^{1,n-1}$-Sobolev homeomorphisms with $n\geq 3$,
the a.e.-differentiability was established
by considering the integrability of the
inner
distortion $K_{I}\in L^{1}(\Omega)$, where 
$J_f(x):=\det Df(x)$
is the Jacobian,  $\adj Df$ is the adjugate matrix of $Df$ and
\vspace{0.1\baselineskip}
$$
    K_I:=\dfrac{|\adj Df|^n}{J_f(x)^{n-1}}, 
    \vspace{0.1\baselineskip}
$$ 
see \cite{Ten2014}.
This condition on integrability of distortion is sharp, meaning 
for any $\delta\in(0,1)$ and $n\geq 3$ there exists a homeomorphism $f\in W^{1,n-1}((-1,1)^n,\rn)$ such that
$K_I \in L^{\delta}((-1,1)^n)$ and $f$ is not classically differentiable on a set of positive measure~\cite{HenTen2017}.
The a.e.-differentiability 
of $W^{1,n-1}$-Sobolev maps also holds for continuous, open, and discrete mappings of finite distortion with nonnegative Jacobian if a particular weighted distortion function is integrable \cite{Vod2018}.
The condition $K_I\in L^{1}(\Omega)$ essentially means that $f^{-1} \in W^{1,n}(f(\Omega),\rn)$ \cite[Theorem 1.1]{MosPas2014}. 
Together with the oscillation estimate from \cite[Lemma 2.1]{Onn2006} we then obtain that for almost all $x\in \Omega$ 
\[
    \limsup_{r\to0+} \frac{\osc_{B(x,r)} f}{r}<\infty,
\]
and hence $f$ is differentiable in $x$ by the Stepanov Theorem.
Thus, instead of assumptions for distortion, we can directly consider bi-Sobolev homeomorphisms.
The inverse mapping theorem (see e.g.~\cite[Theorem A.29]{HenKos2014})
states that
if $f\in W^{1,n-1}$, $J_{f}>0$~a.e., and $f^{-1}\in W^{1,p}$ with $p>n-1$,
then both $f$ and $f^{-1}$ are differentiable almost everywhere
(for a more general approach, the reader is referred to \cite{Vod2020}).
However, Cs\"{o}rnyei, Hencl, and Mal\'{y} constructed in Example 5.2 in \cite{CsoHenMal2010}
 a homeomorphism
$f\in W^{1,n-1}((-1,1)^n,\rn)$, $n\geq 3$, with $J_{f}>0$ a.e.\ that is nowhere differentiable and its inverse $f^{-1}\in  W^{1,n-1}((-1,1)^n,\rn)$ is also nowhere differentiable.

In this work, we examine the a.e.-differentiability of a class of \textit{weak limits of homeomorphisms}. This class of mappings is well suited for the calculus of variations approach and may serve as deformations in Continuum Mechanics models. 
For further information, refer to \cite{IwaOnn2009,IwaOnnZhu2020,MolVod2020}.
Weak limits of Sobolev homeomorphisms have received significant attention in recent years, with various studies conducted, including \cite{BouHenMol2020,CamHenKauRad2018,DePhiPra2020,DolHenMal2021,DolHenMol2022,FusMosSbo2008,HenOnn2018,IwaOnn2017}.

Here 
we consider the energy functional
\[
    \cE(f):=\int_{\Omega}|D f(x)|^{n-1}\dx + \int_{\Omega'}|D f^{-1}(y)|^{p}\dy
\]
for bi-Sobolev mappings $f\colon \Omega \to \Omega'$ such that $f$ is invertible almost everywhere, $f\in W^{1,n-1}(\Omega,\mathbb{R}^n)$, and $f^{-1}\in W^{1,p}(\Omega',\mathbb{R}^n)$ for some $p>n-1$.

The main result, which is proven in Section \ref{corollary_application}, reads as follows.
\begin{thm}\label{thm:main}
    Let $n\geq 2$, $p>n-1$, $\Omega$, $\Omega'\subset\rn$ be bounded domains  
    and 
    $f_k\in W^{1,n-1}(\Omega,\rn)$, 
    $k =0,1,2\dots$, be homeomorphisms of $\overline\Omega$ onto $\overline{\Omega'}$ 
    with $J_{f_k}> 0$ a.e.\ 
    and
    \[
        \sup_k\cE(f_k)<\infty.
    \]
    Assume that $f\colon\Omega\to\rn$ is a weak limit of $\{f_k\}_{k\in \mathbb{N}}$ in $W^{1,n-1}(\Omega,\rn)$ with $J_{f}>0$ a.e.\ and $h\colon\Omega'\to\rn$ is a weak limit of $\{f_k^{-1}\}_{k\in \mathbb{N}}$ in $W^{1,p}(\Omega',\rn)$ with $J_{h}>0$ a.e.
    Then for a.e.\ $x\in\Omega$ we have $h(f(x))=x$ 
    and for a.e.\ $y\in\Omega'$ we have 
    $f(h(y))=y$,
    and both $f$ and $h$ are differentiable almost everywhere.
\end{thm}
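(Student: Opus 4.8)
The plan is to reduce the statement to the a.e.-differentiability of a single bi-Sobolev homeomorphism, which is already available by the inverse mapping theorem cited in the introduction, and to do this one would want to show that the weak limits $f$ and $h$ are in fact mutual inverses — or at least close enough to mutual inverses for the oscillation/Stepanov machinery to apply. The first step is therefore to establish the identities $h(f(x))=x$ and $f(h(y))=y$ almost everywhere. Since $f$ and $h$ are only defined up to sets of measure zero and need not be continuous, I would not try to prove they are homeomorphisms; instead I would argue through the convergence of the sequences. Passing to a subsequence, $f_k\to f$ and $f_k^{-1}\to h$ a.e.\ (by compact embedding on the relevant ranges $p>n-1\ge 1$, or along a subsequence in measure), and the known theory of weak limits of Sobolev homeomorphisms — in particular the satisfaction of condition $\INV$ and Luzin's $\N$ and $\N^{-1}$ properties under bounds of this type, as in the references \cite{DolHenMal2021,DolHenMol2022} — should give that $f$ and $h$ carry null sets to null sets and that the degree/multiplicity structure is controlled. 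Combining $f_k^{-1}(f_k(x))=x$ with the a.e.\ convergence and an equi-integrability argument (using $\sup_k\cE(f_k)<\infty$ to control concentration) should yield $h\circ f=\mathrm{id}$ a.e.\ and symmetrically $f\circ h=\mathrm{id}$ a.e.

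Once the mutual-inverse identities hold a.e., the second step is to upgrade integrability: from $f\in W^{1,n-1}(\Omega)$ with $J_f>0$ a.e.\ and $h\in W^{1,p}(\Omega')$ with $J_h>0$ a.e., together with $h=f^{-1}$ in the a.e.\ sense, I would like to say that $f$ (modified on a null set) is genuinely a bi-Sobolev homeomorphism between full-measure subsets, so that the inverse mapping theorem \cite[Theorem A.29]{HenKos2014} applies verbatim. Concretely, the relation $h\circ f=\mathrm{id}$ a.e.\ forces $Df(x)$ to be invertible a.e.\ with $Dh(f(x))=Df(x)^{-1}$ via the a.e.\ chain rule for Sobolev maps satisfying Luzin $\N$, and then the oscillation estimate \cite[Lemma 2.1]{Onn2006} applied to $h$ on small balls around a.e.\ point $y=f(x)$ — using $h\in W^{1,p}$, $p>n-1$ — gives $\limsup_{r\to 0+}\osc_{B(x,r)} f/r<\infty$ at a.e.\ $x$, whence $f$ is differentiable a.e.\ by Stepanov's theorem. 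The symmetric argument with the roles of $f$ and $h$ swapped, using $f\in W^{1,n-1}$ and $J_f>0$ a.e.\ to bound the inner distortion and quoting \cite{Ten2014} or the route through \cite[Theorem 1.1]{MosPas2014} and \cite[Lemma 2.1]{Onn2006}, gives differentiability of $h$ a.e.

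The main obstacle I anticipate is precisely the first step — proving $h\circ f=\mathrm{id}$ and $f\circ h=\mathrm{id}$ a.e.\ under the weak hypotheses — because weak limits of homeomorphisms can fail to be injective (they may ``squeeze'' positive-measure sets to lower-dimensional sets or create overlaps), and the assumptions $J_f>0$ and $J_h>0$ a.e.\ are exactly what is needed to rule this out but must be exploited carefully. The delicate point is controlling the composition $h\circ f$ when neither map is continuous: one cannot naively pass to the limit in $f_k^{-1}\circ f_k=\mathrm{id}$ because $h\circ f_k$ need not converge to $h\circ f$ without area-formula-type control on how $f_k$ distributes mass. I expect the resolution to use the change of variables formula for $f_k$ (valid since each $f_k$ is a homeomorphism with $J_{f_k}>0$, hence satisfies Luzin $\N$ in this regularity range) to push the identity to the limit, together with lower semicontinuity of the energy to ensure $J_f>0$ a.e.\ is compatible with $f$ being essentially injective; this is where the hypothesis $\sup_k\cE(f_k)<\infty$ does its real work, and getting the bookkeeping exactly right — especially matching the a.e.\ sets on the domain and target sides — is the technical heart of the argument.
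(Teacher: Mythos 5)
Your Step 1 — establishing $h\circ f=\mathrm{id}$ and $f\circ h=\mathrm{id}$ a.e.\ via pointwise convergence of $f_k$, $f_k^{-1}$ along a subsequence, the $\INV$ condition, and Luzin $\N$/$\N^{-1}$ — is broadly the right idea and matches the paper's Lemma~\ref{lemma:injectivity} (which works with the super-precise representative $h^{**}$, applies the oscillation estimate of Lemma~\ref{lem:h-monotone} to compare $f_{k_j}^{-1}(f_{k_j}(x))$ with $h^{**}(f(x))$, and then bootstraps to the reverse identity by an injectivity/full-measure bookkeeping argument). You are vague on the mechanism but pointed in the right direction.

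Step 2 has a genuine gap, and it is where the real difficulty lies. You propose that once $h\circ f=\mathrm{id}$ a.e.\ is known, ``$f$ (modified on a null set) is genuinely a bi-Sobolev homeomorphism between full-measure subsets, so that the inverse mapping theorem \cite[Theorem A.29]{HenKos2014} applies verbatim,'' and that the oscillation estimate for $h$ on balls around $y=f(x)$ yields $\limsup_{r\to 0^+}\osc_{B(x,r)}f/r<\infty$. Both claims fail as stated. The cited inverse mapping theorem is for honest homeomorphisms, and here $f$ need not be continuous on any open set — only its restriction to a full-measure set $\Lambda$ is continuous in the relative topology, which is far weaker. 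More fundamentally, the oscillation bound for $h$ on a ball $B(y,r)$ controls $h$ near $y$, not $f$ near $x$: to transfer it you would need to know that $f(B(x,r))$ is comparable to a ball around $y$ of comparable radius, which is exactly what is not available and cannot be extracted from $h\circ f=\mathrm{id}$ a.e.\ alone. This is the circularity: you are trying to bound oscillation of $f$ using information about $h$ without any a priori control of how $f$ distributes small balls.

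The paper's actual resolution of Step 2 requires three separate ingredients you do not supply: (i) a modified inverse function theorem (Lemma~\ref{lem:A29}) that operates entirely within the relative topology on full-measure sets $\Lambda\subset\Omega$, $\Lambda'\subset\Omega'$; (ii) a topological-degree argument (Proposition~\ref{prop:h-ball}, used in Step 2 of Theorem~\ref{thm:diff}) to show that $f|_\Lambda$ is continuous in the relative topology — this exploits the strong $\INV$ condition for $h^{**}$ and the fact that $h^{T}(B(y_0,r_m))\supset B(h(y_0),r_m/3)$ for a sequence of good radii; and (iii) a delicate ACL/intermediate-value argument (Step 3 of Theorem~\ref{thm:diff}) to upgrade the relative-topology differentiability of $f|_\Lambda$ to genuine a.e.-differentiability of the hyper-precise representative $\tilde f$, by approximating an arbitrary nearby point $x'$ by points of $\Lambda$ on lines where $\tilde f$ is absolutely continuous. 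Without these steps — in particular (iii), which is the technical heart — your proposal does not close. Differentiability of $h$, by contrast, is straightforward (Lemma~\ref{lem:h-diff}: monotonicity from the strong $\INV$ condition plus $p>n-1$ gives a linear oscillation bound and then Stepanov), and your sketch for that side is essentially correct.
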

Let us note the following result, which better suits the Calculus of Variations approach since it formulates the assumptions only for $f_k$.

\begin{corollary}\label{corollary_application}
    Let $n\geq 2$, $p>n-1$, $\Omega$, $\Omega'\subset\rn$ be bounded domains 
    and 
    $\varphi$ be a positive convex function on $(0,\infty)$ with 
    \begin{equation}\label{varphi}
        \lim_{t\to 0^+}\ff(t)=\infty\quad\text{ and }\quad \lim_{t\to \infty}\frac{\ff(t)}{t}=\infty.
    \end{equation}
    Let $f_k\in W^{1,n-1}(\Omega,\rn)$, 
    $k =0,1,2\dots$, be homeomorphisms of $\overline\Omega$ onto $\overline{\Omega'}$ with $J_{f_k}> 0$ a.e.\ 
    such that 
    $\sup_k\cF(f_k)<\infty$,
    where
    \[
        \cF(f):=\int_{\Omega}|D f(x)|^{n-1} + \frac{|\adj D f (x)|^p}{J_f^{p-1}(x)} + \varphi(J_f (x))\dx.
    \]
    Assume that $f\colon\Omega\to\rn$ is a weak limit of $\{f_k\}_{k\in \mathbb{N}}$ in $W^{1,n-1}(\Omega,\rn)$
    and $h\colon\Omega'\to\rn$ is a weak limit of $\{f_k^{-1}\}_{k\in \mathbb{N}}$ in $W^{1,p}(\Omega',\rn)$.
    Then for a.e.\ $x\in\Omega$ we have $h(f(x))=x$ 
    and for a.e.\ $y\in\Omega'$ we have 
    $f(h(y))=y$,
    and both $f$ and $h$ are differentiable almost everywhere. 
\end{corollary}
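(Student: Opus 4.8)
The plan is to reduce Corollary~\ref{corollary_application} to Theorem~\ref{thm:main}: given only the hypotheses on $\cF$, I would verify the three hypotheses of the Theorem, namely $\sup_k\cE(f_k)<\infty$, $J_f>0$ a.e.\ in $\Omega$, and $J_h>0$ a.e.\ in $\Omega'$ (the first of these presupposes $f_k^{-1}\in W^{1,p}(\Omega',\rn)$, which must be established as well so that $\cE(f_k)$ and the limit $h$ are meaningful).

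First I would treat the energy bound. For a fixed $g=f_k$, a homeomorphism of $\overline\Omega$ onto $\overline{\Omega'}$ with $g\in W^{1,n-1}$ and $J_g>0$ a.e., the area formula for Sobolev maps gives $|g(A)|=\int_A J_g\dx$ for every measurable $A\subset\Omega$, whence the change of variables $\int_{\Omega'}u\dy=\int_\Omega(u\circ g)\,J_g\dx$ for all measurable $u\ge0$; combined with the inverse function theorem for bi-Sobolev homeomorphisms (see e.g.\ \cite{HenKos2014}), which provides $g^{-1}\in W^{1,1}$ together with $Dg^{-1}(g(x))=[Dg(x)]^{-1}=\adj Dg(x)/J_g(x)$ for a.e.\ $x$, one arrives at
\[
    \int_{\Omega'}|Dg^{-1}(y)|^{p}\dy=\int_{\Omega}\frac{|\adj Dg(x)|^{p}}{J_g(x)^{p-1}}\dx .
\]
For $g=f_k$ the right-hand side is at most $\cF(f_k)$, so $f_k^{-1}\in W^{1,p}(\Omega',\rn)$ with $\sup_k\|f_k^{-1}\|_{W^{1,p}}<\infty$ (its $L^p$-norm is controlled since $f_k^{-1}(\Omega')\subset\Omega$ is bounded), and, as $\varphi\ge0$, $\cE(f_k)\le\cF(f_k)$; hence $\sup_k\cE(f_k)<\infty$.

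Next I would establish positivity of the Jacobians. Since $\varphi$ is superlinear at infinity, $\sup_k\int_\Omega\varphi(J_{f_k})\dx<\infty$ forces, by the de la Vallée Poussin criterion, equiintegrability of $\{J_{f_k}\}$ on $\Omega$; passing to a subsequence, $J_{f_k}\rightharpoonup\mu$ in $L^1(\Omega)$. For the inverse maps the trick is to use the perspective function $\psi(s):=s\,\varphi(1/s)$, which is positive and convex on $(0,\infty)$, has $\psi(s)/s=\varphi(1/s)\to\infty$ as $s\to\infty$, and $\psi(s)=\varphi(1/s)/(1/s)\to\infty$ as $s\to0^{+}$; the change of variables $y=f_k(x)$ together with $J_{f_k^{-1}}(f_k(x))=1/J_{f_k}(x)$ yields
\[
    \int_{\Omega'}\psi\bigl(J_{f_k^{-1}}(y)\bigr)\dy=\int_\Omega\frac{\varphi(J_{f_k}(x))}{J_{f_k}(x)}\,J_{f_k}(x)\dx=\int_\Omega\varphi(J_{f_k}(x))\dx\le\cF(f_k),
\]
so $\{J_{f_k^{-1}}\}$ is equiintegrable on $\Omega'$ and, along a further subsequence, $J_{f_k^{-1}}\rightharpoonup\nu$ in $L^1(\Omega')$. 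Granting the identifications $\mu=J_f$ a.e.\ and $\nu=J_h$ a.e., I would extend $\varphi$ and $\psi$ by $+\infty$ on $(-\infty,0]$ — these extensions are convex and lower semicontinuous — and invoke weak lower semicontinuity of convex integral functionals on $L^1$ to get $\int_\Omega\varphi(J_f)\dx\le\liminf_k\int_\Omega\varphi(J_{f_k})\dx<\infty$ and $\int_{\Omega'}\psi(J_h)\dy\le\liminf_k\int_{\Omega'}\psi(J_{f_k^{-1}})\dy<\infty$. Because $\varphi$ and $\psi$ are $+\infty$ at $0$, these finiteness statements force $J_f>0$ a.e.\ and $J_h>0$ a.e., so Theorem~\ref{thm:main} applies and finishes the proof.

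The hard part will be the identification of the weak $L^1$-limits $\mu$ and $\nu$ with the pointwise Jacobians $J_f$ and $J_h$. Here $f_k$ is bounded in $W^{1,n-1}$ and in $L^\infty$ (its image lies in the bounded $\Omega'$) and $f_k\to f$ in $L^{n-1}$ and a.e.\ along a subsequence, so weak continuity of the $(n-1)$-minors together with equiintegrability of $J_{f_k}$ identifies the distributional Jacobian of $f$ with the absolutely continuous measure $\mu\,\cL^n$; what is left is to rule out a gap between $\mu$ and the pointwise Jacobian $J_f$, i.e.\ to exclude (distributed) cavitation, and this is exactly where the equiintegrability of both $\{J_{f_k}\}$ and $\{J_{f_k^{-1}}\}$ must be used, relying on the structural analysis of weak limits of Sobolev homeomorphisms developed in the body of the paper (cf.\ also \cite{DolHenMol2022}). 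The symmetric argument for $\{f_k^{-1}\}$, whose images lie in the bounded set $\Omega$, gives $\nu=J_h$.
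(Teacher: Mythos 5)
Your overall plan is the same as the paper's: reduce to Theorem~\ref{thm:main} by verifying $\sup_k\cE(f_k)<\infty$, $J_f>0$ a.e., and $J_h>0$ a.e. The energy estimate is handled essentially as in the paper — you derive $\int_{\Omega'}|Df_k^{-1}|^p\dy\le\int_\Omega|\adj Df_k|^p/J_{f_k}^{p-1}\dx$ (the paper obtains this as an inequality by following \cite[Theorem~1.1]{MosPas2014} with $n$ replaced by $p$, rather than as an exact change-of-variables identity, which sidesteps the Lusin $\N$ issues your computation would need to address) — and this gives $\cE(f_k)\le\cF(f_k)$ and boundedness of $\{f_k^{-1}\}$ in $W^{1,p}$. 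Up to here you are on solid ground.

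The genuine gap is in the Jacobian-positivity step, and you have correctly named it yourself. Your route passes to weak $L^1$-limits $J_{f_k}\rightharpoonup\mu$ and $J_{f_k^{-1}}\rightharpoonup\nu$ (the perspective function $\psi(s)=s\varphi(1/s)$ is a nice observation for the inverses) and then appeals to lower semicontinuity of the convex integrand to get $\mu>0$ and $\nu>0$ a.e. But in the regime $f_k\rightharpoonup f$ in $W^{1,n-1}$ with $n\ge3$, the weak $L^1$-limit $\mu$ need not coincide with the pointwise Jacobian $J_f$; the standard semicontinuity for orientation-preserving maps gives only $J_f\le\mu$ a.e., and a genuine defect $\mu-J_f\ge0$ (cavitation) is possible. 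Thus $\mu>0$ a.e.\ does not by itself yield $J_f>0$ a.e., and your proposal does not close this gap — you end by deferring to ``the structural analysis of weak limits of Sobolev homeomorphisms.'' The paper avoids the whole issue: it invokes \cite[Lemma~2.3]{DolHenMol2022} directly, which under precisely the hypothesis $\sup_k\int_\Omega\varphi(J_{f_k})\dx<\infty$ with $\varphi$ as in~\eqref{varphi} and weak convergence of the homeomorphisms and their inverses delivers $J_f>0$ a.e.\ and $J_h>0$ a.e.\ in one stroke. To complete your argument you would either have to cite that lemma (as the paper does) or reprove it, which requires more than equiintegrability and convexity — it is exactly where the no-cavitation structure of weak limits of bi-Sobolev homeomorphisms must be used.
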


%======================================================

\section{Preliminaries}

By $B(c,r)$, we denote the open euclidean ball with centre $c \in \mathbb{R}^{n}$ and radius $r>0$, and $S(c, r)$ stands for the corresponding sphere.

\subsection{Topological image and $\INV$ condition}
Although a weak limit of homeomorphisms may not be a homeomorphism, it may possess an invertibility property known as the $\INV$ condition.
The $\INV$ condition states, informally, that a ball $B(x,r)$ is mapped inside the image of the sphere $f(S(x,r))$ and the complement $\Omega\setminus \overline{B(x,r)}$ is mapped outside $f(S(x,r))$.
This concept was introduced for $W^{1,p}$-mappings, where $p>n-1$, by 
M\"uller and Spector \cite{MulSpe1995}, 
although the fact that a ball $B(x,r)$ is mapped inside the image of a sphere $f(S(a,r))$  was known in literature before as \textit{monotonicity}, see \cite{Resh1967-2} and \cite[\S 2]{VodGold1976}.
Suppose that $f\colon S(y,r) \to \er^{n}$ is continuous, we define the \textit{topological image} of $B(x,r)$ as
\begin{equation}
    f^{T}(B(x,r)):=\{z\in\rn\setminus f(S(x,r)) : \deg(f,S(x,r),z)\neq 0\}
\end{equation}
and the \textit{topological image} of $x$ as
$$
f^{T}(x):= \bigcap_{r>0, r\not\in N_x} {f^*}^{T}(B(x,r))\cup f^*(S(x,r)),
$$
where $N_x$ is a null set from the definition just below.

\begin{definition}\label{def:inv}
    A mapping $f\colon\Omega\to \rn$ satisfies the \textit{$\INV$ condition},
    provided that for every $x\in \Omega$ there exist a constant $r_{x}>0$ and an $\cL^{1}$-null set $N_x$ such that for all $r\in(0,r_{x})\setminus N_{x}$, 
    the restriction $f|_{S(x,r)}$ is continuous and
    \begin{enumerate}
        \item[(i)] $f(z) \in f^{T}(B(x,r))\cup f(S(x,r))$ for a.e.~$z\in \overline{B(x,r)}$,
        \item[(ii)] $f(z) \in \rn \setminus f^{T}(B(x,r))$ for a.e.~$z\in \Omega \setminus B(x,r)$.
    \end{enumerate} 
\end{definition}

Let us note that for a particular representative of a Sobolev mapping, Definition~\ref{def:inv} allows for some points to escape their destiny, e.g.\ a null-set inside the ball may be mapped outside the image of this ball. Thus, we also consider a stronger version of the $\INV$ condition.

\begin{definition}\label{def:strong-inv}
    A mapping $f\colon\Omega\to \rn$ satisfies the \textit{strong $\INV$ condition},
    provided that for every $x\in \Omega$ there exist a constant $r_{x}>0$ and an $\cL^{1}$-null set $N_x$ such that for all $r\in(0,r_{x})\setminus N_{x}$ the restriction $f|_{S(x,r)}$ is continuous and
    \begin{enumerate}
        \item[(i)] $f(z) \in f^{T}(B(x,r))\cup f(S(x,r))$ for every $z\in \overline{B(x,r)}$,
        \item[(ii)] $f(z) \in \rn \setminus f^{T}(B(x,r))$ for every $z\in \Omega \setminus B(x,r)$.
    \end{enumerate} 
\end{definition}

\subsection{Precise, super-precise, and hyper-precise representative of a Sobolev mapping}
Let $1\leq p \leq n$ 
and $f\in W^{1,p}(\rn)$, then the \textit{precise representative} of $f$ is given by
\begin{equation}\label{def:precise}
    f^{*}(a):= 
    \begin{cases}
    \displaystyle
    \lim\limits_{r \rightarrow 0^{+}} \frac{1}{|B(a, r)|} \int_{B(a, r)}f(x) \dx & \text { if the limit exists,} \\
    0 & \text { otherwise.}
    \end{cases}
\end{equation}
Note that the representative $f^{*}$ is $p$-quasicontinuous (see remarks after \cite[Proposition~2.8]{MulSpe1995}). 

Let now $f\colon \Omega \to \rn$ be a $W^{1,p}$-weak limit of homeomorphisms $f_{k}\colon \Omega \to \rn$ with $p\in(n-1,n]$ for $n>2$ or $p \in [1,2]$ for $n=2$.
Then by \cite[Theorem 5.2]{BouHenMol2020} there exists an $\mathcal{H}^{n-p}$-null set $N C \subset \Omega$ and a representative $f^{**}$ of $f$ such that $f^{**}$ is continuous at every $x \in \Omega \backslash N C$,
a set-valued image 
$f^{T}(x)$ is a singleton 
for every $y \in \Omega \backslash N C, f^{**}=f^{*} \operatorname{cap}_{p}$-a.e., and $f^{**}$ can be chosen so that
$f^{**}(x) \in f^{T}(x)$ for every $x \in \Omega$.
We will call $f^{**}$ a \textit{super-precise representative} of $f$.

The \textit{hyper-precise} representative $\tilde{f}$ is defined as 
\begin{equation}\label{def:hyper-precise}
    \tilde{f}(a):= 
    \limsup\limits_{r \rightarrow 0^{+}} \frac{1}{|B(a, r)|} \int_{B(a, r)}f(x) \dx.
\end{equation}

We need the following monotonicity property of mappings satisfying the strong $\INV$ condition.
\begin{lemma}\label{lem:h-monotone}
    Let $n\geq 2$ and $\Omega'\subset\rn$ be a bounded domain.
    If 
    $h\colon \Omega' \to \rn$
    satisfies the strong $\INV$ condition, then $h$ is monotone for almost all radii, 
    i.e.~for $y\in\Omega'$ there exists an $\mathcal{L}^{1}$-null set $N_{y}$ such that for all $r \in (0, r_{y}) \setminus N_{y}$ it holds that
    $\osc_{B(y,r)} h \leq  \osc_{S(y,r)} h$.\\
    If, moreover,
    $h\in W^{1,p}(\Omega',\rn)$ 
    with $p>n-1$, then for any $r \in \left(0, \frac{r_{y}}{2}\right)$ the following estimate holds 
    $$
        \osc_{B(y,r)} h \leq C r \left(r^{-n}  \int_{B(y,2r)} |Dh|^p\right)^{1/p}. 
    $$
\end{lemma}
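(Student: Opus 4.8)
The plan is to establish the two assertions in turn, the first by a purely topological "filling'' argument and the second by combining it with a Morrey embedding on spheres plus an integration trick to pass from almost every radius to every radius.

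For the first part, fix $y\in\Omega'$ and let $r_y$, $N_y$ be as in the strong $\INV$ condition; fix $r\in(0,r_y)\setminus N_y$, so $h|_{S(y,r)}$ is continuous and $K:=h(S(y,r))$ is compact and connected. The key observation I would use is that $h^T(B(y,r))\cup K$ has the same diameter as $K$: if $H$ is any closed half-space containing $K$, then $\rn\setminus H$ is open, connected, unbounded and disjoint from $K$, hence contained in the unbounded component $V$ of $\rn\setminus K$; since $\deg(h,S(y,r),\cdot)\equiv 0$ on $V$, we get $h^T(B(y,r))\subseteq\rn\setminus V\subseteq H$, and trivially $K\subseteq H$ as well. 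Intersecting over all such half-spaces yields $h^T(B(y,r))\cup K\subseteq\overline{\mathrm{conv}}\,K$, so $\diam\big(h^T(B(y,r))\cup K\big)\le\diam K$. Strong $\INV$ (i) gives $h(z)\in h^T(B(y,r))\cup K$ for every $z\in\overline{B(y,r)}$, whence
\[
    \osc_{B(y,r)} h\le\diam\big(h^T(B(y,r))\cup K\big)\le\diam K=\osc_{S(y,r)} h,
\]
which is the first claim, with $N_y$ the null set from the strong $\INV$ condition.

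For the second part, assume in addition $h\in W^{1,p}(\Omega',\rn)$ with $p>n-1$, and shrink $r_y$ so that $B(y,r_y)\subseteq\Omega'$. By the standard Fubini argument in spherical coordinates centred at $y$, for almost every $\rho\in(0,r_y)$ the restriction $h|_{S(y,\rho)}$ lies in $W^{1,p}(S(y,\rho),\rn)$ with tangential gradient dominated by $|Dh|$ on $S(y,\rho)$, and $\int_{S(y,\rho)}|Dh|^p\,d\haus<\infty$. Since $p>n-1=\dim S(y,\rho)$, Morrey's embedding on the sphere of radius $\rho$ (by scaling from the unit sphere) gives a constant $C=C(n,p)$ with $\osc_{S(y,\rho)} h\le C\,\rho^{\,1-(n-1)/p}\big(\int_{S(y,\rho)}|Dh|^p\,d\haus\big)^{1/p}$; combining with the first part, for a.e.\ $\rho\in(0,r_y)$,
\[
    \big(\osc_{B(y,\rho)} h\big)^p\le C^p\,\rho^{\,p-(n-1)}\int_{S(y,\rho)}|Dh|^p\,d\haus.
\]
Now fix $r\in(0,r_y/2)$. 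Since $\rho\mapsto\osc_{B(y,\rho)} h$ is non-decreasing, $\osc_{B(y,r)} h\le\osc_{B(y,\rho)} h$ for all $\rho\in(r,2r)$; integrating the displayed inequality over $\rho\in(r,2r)$, using $p-(n-1)\ge0$ and the coarea identity $\int_r^{2r}\int_{S(y,\rho)}|Dh|^p\,d\haus\,d\rho=\int_{B(y,2r)\setminus B(y,r)}|Dh|^p\dx$, gives $r\,(\osc_{B(y,r)} h)^p\le C^p(2r)^{\,p-(n-1)}\int_{B(y,2r)}|Dh|^p\dx$. Dividing by $r$ and taking $p$-th roots yields $\osc_{B(y,r)} h\le C'\,r^{\,1-n/p}\big(\int_{B(y,2r)}|Dh|^p\big)^{1/p}=C'\,r\big(r^{-n}\int_{B(y,2r)}|Dh|^p\big)^{1/p}$ with $C'=C'(n,p)$, as claimed.

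The only genuinely delicate step is the Fubini/ACL argument guaranteeing that for a.e.\ $\rho$ the spherical restriction is a Sobolev map on $S(y,\rho)$ with tangential gradient controlled by $|Dh|$, so that Morrey's inequality applies with a constant uniform in $\rho$; the filling lemma $\diam\big(h^T(B(y,r))\cup K\big)=\diam K$ and the final averaging-in-$\rho$ trick are elementary by comparison.
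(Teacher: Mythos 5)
Your proof is correct and follows essentially the same strategy as the paper: bound $\osc_B h$ by $\osc_S h$ via the degree/topological-image argument under strong $\INV$, then apply Morrey's embedding on almost every sphere and average over radii in $(r,2r)$. Your convex-hull/half-space formulation of the topological step and the explicit coarea integration are just more spelled-out versions of the paper's appeal to the unbounded component of $\rn\setminus h(S(y,r))$ and to \cite[Lemma 2.19, Theorem 2.24]{HenKos2014}, so there is no substantive difference.
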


\begin{proof}
Let $N_{y}$ be a set from Definition~\ref{def:strong-inv}.
Then for $y\in\Omega'$ and $r \in (0, r_{y}) \setminus N_{y}$ it holds that 
$h$ is continuous on the sphere $S(y,r)$
and 
$h(z) \in h^{T}(B(y,r))\cup h(S(y,r))$ for every $z\in \overline{B(y,r)}$.
In this case, $h(S(y,r))$ is a compact set and $h^T(B(y,r))\subseteq \mathbb{R}^n\setminus A$, where $A$ is the unbounded component of $\mathbb{R}^n\setminus h(S(y,r))$ (since by the basic properties of the topological degree \cite[p.~48(d)]{HenKos2014} we have $\deg (h,S(y,r),\xi)=0$ for all $\xi \in A$), 
and therefore 
$\osc_{B(y,r)} h \leq  \osc_{S(y,r)} h$.

Further, for $y\in\Omega'$ and $r>0$, and for a.e.~$t \in [r, 2r)$, 
it holds that 
$$
    \osc_{B(y,r)} h \leq \osc_{B(y,t)} h \leq  \osc_{S(y,t)} h.  
$$
Then by the Sobolev embedding theorem on spheres \cite[Lemma 2.19]{HenKos2014}, following the proof of \cite[Theorem 2.24]{HenKos2014}, we obtain that 
$$
    \osc_{B(y,r)} h  \leq \osc_{S(y,t)} h \leq C t \left(t^{-n+1}  \int_{S(y,t)} |Dh|^p\right)^{1/p}\leq C r \left(r^{-n}  \int_{B(y,2r)} |Dh|^p\right)^{1/p}.     
$$
\end{proof}

\begin{remark} \label{rem:p>n}
    In case $p>n$,
    $h^*=h^{**}=\tilde{h}$
    is the continuous representative of
    $h$
    and 
    $h^{*}$ is
    differentiable almost everywhere \cite{Cal1951} and satisfies
    the Lusin $\N$ condition in $\Omega$ \cite{MarMiz1973}.
    Moreover, due to compact embedding of $W^{1,p}$ into the H\"older space $C^{0,\alpha}$, weak convergence in $W^{1,p}$ implies uniform convergence on compact sets.
    With these properties, the subsequent analysis becomes simplified, and the details are left to the reader.
\end{remark}

\section{A.e.-invertibility of $f$}

Since a limit of homeomorphisms may not be a homeomorphism, we need to define a weaker notion of inverse mapping. 
First
recall that a mapping $f\colon \Omega \to \Omega'$ is called \textit{injective a.e.~in domain} if there exists a null set $\Sigma \subset \Omega$, $|\Sigma|=0$, such that the restriction $f|_{\Omega\setminus\Sigma}\colon \Omega\setminus\Sigma \to f(\Omega\setminus\Sigma)$ is injective. 
A mapping $f\colon \Omega \to \Omega'$ is called \textit{injective a.e.~in image} if there exists a null set $\Sigma' \subset \Omega'$, $|\Sigma'|=0$, such that for any $y\in f(\Omega)\setminus \Sigma'$ the preimage $f^{-1}(y):=\{x\in\Omega : f(x)=y\}$ consists of only one point.
Note that if $f$ is injective a.e.~in image and satisfies the $\N^{-1}$ condition, then $f$ is injective a.e.~in domain. 
If instead $f$ is injective a.e.~in domain, $f$ satisfies the $\N$ condition, and $|\Omega'|=|f(\Omega)|$ 
then $f$ is injective a.e.~in image.
We say that $h\colon \Omega' \to \Omega$ is the a.e.-inverse to $f\colon \Omega \to \Omega'$ if 
for a.e.\ $x\in\Omega$ we have $h(f(x))=x$ 
and for a.e.\ $y\in\Omega'$ we have $f(h(y))=y$.
Note that if $f$ satisfies the $\N^{-1}$ condition, then $f$ is injective~a.e.~in image if and only if there exists the a.e.-inverse to $f$. 

The following lemma provides some additional conditions that guarantee the a.e.-invertibility of $f$ in our setting.

\begin{lemma}\label{lemma:injectivity}
    Let
    $n\geq 2$,
    $\Omega$ and $\Omega'$ be bounded domains in $\mathbb{R}^n$, $p>n-1$, and let $f_k\in W^{1,n-1}(\Omega,\rn)$ be homeomorphisms of $\overline\Omega$ onto $\overline{\Omega'}$ with $J_{f_k}>0$. Let also $f\colon\Omega\to\rn$ be a weak limit of $\{f_k\}_{k\in\mathbb{N}}$ in $W^{1,n-1}(\Omega,\rn)$ with $J_f>0$~a.e. 
    Assume also that 
    the sequence 
    $\{f_k^{-1}\}_{k\in\mathbb{N}}$ 
    converges $W^{1,p}$-weakly to $h\colon \Omega' \to \mathbb{R}^{n}$ with $J_h>0$ a.e.
    Then $h^{**}(f(x))=x$ a.e.~in $\Omega$ and $f(h^{**}(y))=y$ a.e.~in $\Omega'$.
\end{lemma}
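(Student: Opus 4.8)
The plan is to prove the two identities by different mechanisms, reflecting the asymmetry of the hypotheses: a topological degree argument on almost every sphere for $h^{**}\circ f=\mathrm{id}$, carried out on the side where the inverse maps $f_k^{-1}$ and their limit $h$ enjoy the exponent $p>n-1$, and a balance-of-mass argument for $f\circ h^{**}=\mathrm{id}$, where the positivity $J_f>0$ and $J_h>0$ a.e.\ does the essential work. First I would reduce: weakly convergent sequences are bounded, and $W^{1,n-1}$, $W^{1,p}$ embed compactly into $L^1$, so after passing to a subsequence $f_k\to f$ and $f_k^{-1}\to h$ both in $L^1$ and $\mathcal L^n$-a.e. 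Being a $W^{1,p}$-weak limit of homeomorphisms with $p>n-1$, $h$ satisfies the strong $\INV$ condition and admits its super-precise representative $h^{**}$ of \cite[Theorem~5.2]{BouHenMol2020} (so $h^{**}=h$ $\mathcal L^n$-a.e., $h^{**}(y)\in h^T(y)$ for every $y$, and $h^T(y)$ is a singleton off an $\mathcal H^{n-p}$-null set), and by Lemma~\ref{lem:h-monotone} together with Stepanov's theorem $h^{**}$ is differentiable $\mathcal L^n$-a.e.\ and satisfies the Lusin condition $\N$. I would also record the facts inherited from the theory of weak limits of Sobolev homeomorphisms that will be used: $f$ is injective a.e.\ in domain, the bi-Sobolev homeomorphisms $f_k$, $f_k^{-1}$ obey $\N$, $\N^{-1}$ and the change-of-variables formula, and $\int_U J_h\le\liminf_k\int_U J_{f_k^{-1}}$ for open $U$.

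For $h^{**}(f(x))=x$ I would fix $x_0$ in the full-measure set where $f_k(x_0)\to f(x_0)=:y_0$, where $f$ is approximately differentiable with $J_f(x_0)>0$, and where $y_0$ avoids both the $\mathcal H^{n-p}$-exceptional set of $h^{**}$ and the $\operatorname{cap}_p$-null set $\{h^{**}\neq h^{*}\}$ (that this removes only a null set of $x_0$'s is the technical point discussed below). By Fubini applied to the bound on $\int_{\Omega'}|Df_k^{-1}|^p$ and to $f_k^{-1}\to h$ in $L^1(\Omega')$, for a.e.\ radius $r\in(0,r_{y_0})$ the restrictions $f_k^{-1}|_{S(y_0,r)}$ are bounded in $W^{1,p}(S(y_0,r))$ and converge to $h|_{S(y_0,r)}$ in $L^1$; since $p>n-1=\dim S(y_0,r)$, the compact embedding $W^{1,p}(S(y_0,r))\hookrightarrow C^0(S(y_0,r))$ promotes this to uniform convergence --- the one place where $p>n-1$ is used decisively. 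Choosing $r$ also so that $x_0\notin h(S(y_0,r))$, which fails for at most one $r$ by the a.e.-injectivity of $h$, we have for large $k$ that $f_k(x_0)\in B(y_0,r/2)$, hence $x_0=f_k^{-1}(f_k(x_0))\in f_k^{-1}(B(y_0,r))=(f_k^{-1})^T(B(y_0,r))$, so $\deg(f_k^{-1},S(y_0,r),x_0)=1$; uniform convergence and stability of the degree give $\deg(h,S(y_0,r),x_0)=1$, i.e.\ $x_0\in h^T(B(y_0,r))$. Letting $r\to0$ over admissible radii, $x_0\in h^T(y_0)=\{h^{**}(y_0)\}$, that is $h^{**}(f(x_0))=x_0$.

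The identity $f(h^{**}(y))=y$ I would then obtain by mass balance. The previous step shows $f$ is injective a.e.\ in domain with $h^{**}$ as an a.e.-left inverse, so on $A:=f(\Omega\setminus\Sigma)$ (with $|\Sigma|=0$ and $f|_{\Omega\setminus\Sigma}$ injective) the map $h^{**}$ is injective with $h^{**}(A)=\Omega\setminus\Sigma$. Since $h^{**}$ is a.e.\ differentiable, a.e.\ injective and satisfies $\N$, the change-of-variables formula yields $\int_A J_h\,dy=|h^{**}(A)|=|\Omega|$, while $\int_{\Omega'}J_h\,dy\le\liminf_k\int_{\Omega'}J_{f_k^{-1}}\,dy=|\Omega|$ by semicontinuity and $\N$ for the $f_k^{-1}$. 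Hence $\int_{\Omega'\setminus A}J_h\,dy\le0$, so, as $J_h>0$ a.e., $|\Omega'\setminus A|=0$ and $f(h^{**}(y))=y$ for a.e.\ $y\in\Omega'$.

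I expect the principal obstacle to be exactly the borderline regularity $f\in W^{1,n-1}$: on an $(n-1)$-dimensional sphere $W^{1,n-1}$ does not embed into $C^0$ and the $f_k$ carry no uniform modulus of continuity, so the degree argument of the second paragraph cannot be run with the $f_k$ in place of the $f_k^{-1}$; this asymmetry is why the two identities must be proved by two different methods, and why $J_f>0$, $J_h>0$ a.e.\ --- not merely the Sobolev bounds --- are what makes the mass-balance step work. The subsidiary technical point, needed to legitimate the choice of $x_0$ above, is that $f$ does not pull the $\mathcal H^{n-p}$-null exceptional set of $h^{**}$, nor the $\operatorname{cap}_p$-null set $\{h^{**}\neq h^{*}\}$, back to a set of positive $\mathcal L^n$-measure; since both have Hausdorff dimension below $1$, I would derive this from the $(n-1)$-quasicontinuity of $f^{*}$ (so $f^{*}$ pulls small sets back to sets of small capacity) together with the area inequality for the approximately differentiable $f$, its a.e.-injectivity, and $J_f>0$ a.e.
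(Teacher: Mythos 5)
Your two identities are proved by a genuinely different route from the paper, and while the degree argument for $h^{**}\circ f=\mathrm{id}$ is sound, the mass-balance argument for $f\circ h^{**}=\mathrm{id}$ as written has a gap.

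For $h^{**}(f(x))=x$ the paper does not run a degree argument at all: it uses the oscillation estimate of Lemma~\ref{lem:h-monotone} (monotonicity from the strong $\INV$ condition) and \cite[Lemma~2.9]{MulSpe1995} to pass the sphere-oscillation from $h^{**}$ to a subsequence of $f_{k_j}^{-1}$ (subsequence depending on $r$), and then writes $x = f_{k_j}^{-1}(f_{k_j}(x))$ and estimates $|x-h^{**}(f(x))|\le \osc_{B(f(x),2r)}h^{**}+2r\to 0$. Your degree argument is a valid alternative and uses $p>n-1$ for exactly the same purpose (a compact trace embedding on $(n-1)$-spheres), but it is slightly longer and you gloss over the fact that the uniform bound on $\int_{S(y_0,r)}|Df_k^{-1}|^p$ requires an $r$-dependent subsequence, which the paper makes explicit. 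Also, the ``subsidiary technical point'' about pulling exceptional sets back under $f$ is an overcomplication: the $\mathcal H^{n-p}$-null set $NC$ and the $\operatorname{cap}_p$-null set are in particular $\mathcal L^n$-null, and $f$ satisfies $\N^{-1}$ because $J_f>0$ a.e.; no quasicontinuity or area inequality is needed.

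The gap is in Step~2. You assert that $h^{**}$ ``satisfies the Lusin condition $\N$'' as a consequence of Lemma~\ref{lem:h-monotone} and Stepanov's theorem, and then use the equality $\int_A J_h\,dy = |h^{**}(A)|$. But a.e.-differentiability does not imply $\N$, and for $n-1<p\le n$ there is no reason $h^{**}$ should satisfy $\N$ (Remark~\ref{rem:p>n} records $\N$ only for $p>n$ via Marcus--Mizel). Without $\N$, the area formula for an a.e.-approximately-differentiable, a.e.-injective map gives only $\int_A J_h\,dy\le|h^{**}(A)|=|\Omega|$, which is the wrong inequality for your conclusion $\int_{\Omega'\setminus A}J_h\le 0$. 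You additionally invoke $\int_{\Omega'}J_h\le\liminf_k\int_{\Omega'}J_{f_k^{-1}}$; for $p\le n$ (and indeed $p\le n^2/(n+1)$ is not excluded by $p>n-1$) this semicontinuity is not free and would need the M\"uller--Spector distributional-Jacobian machinery. The paper's Step~2 avoids all of this: it uses only Step~1 and a.e.-injectivity of $h^{**}$, choosing $y_k$ with $f_k^{-1}(y_k)=h^{**}(y)$, observing $y_k=f_k(h^{**}(y))\to f(h^{**}(y))=:\tilde y$, applying Step~1 to get $h^{**}(\tilde y)=h^{**}(y)$, and concluding $\tilde y=y$ by injectivity. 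If you wish to keep the mass-balance flavour, the fix is to abandon $\N$ and semicontinuity entirely and apply the one-sided area inequality $\int_B J_h\le|h^{**}(B)|$ directly with $B=(\Omega'\setminus A)\cap G_1'$, where $G_1'$ is the full-measure injectivity set: since $h^{**}|_{G_1'}$ is injective and $h^{**}(A\cap G_1')$ has full measure in $\Omega$, the set $h^{**}(B)$ is $\mathcal L^n$-null, hence $\int_B J_h=0$ and $J_h>0$ a.e.\ forces $|B|=0$.
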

\begin{proof}
Let $p>n-1$, and fix a representative of $f$, which we denote by the same symbol.
If needed, we pass to a subsequence so that $f_k\to f$ and $f_k^{-1}\to h$ pointwise a.e.
Since $h$ is a $W^{1,p}$-weak limit of Sobolev homeomorphisms with $p>n-1$, 
the super-precise representative $h^{**}$ satisfies the strong $\INV$ condition \cite[Theorem 5.2 and Lemma 5.3]{BouHenMol2020}. 
Then there exists a set $G_1'\subset\Omega'$ of full measure $|G_1'|=|\Omega'|$: $J_{h^{**}}(y) >0$ for all $y\in G_1'$, $h^{**}$ is injective in $G_1'$ (see \cite[Lemma 3.4]{MulSpe1995} and \cite[Theorem 1.2]{BouHenMol2020})
and $f^{-1}_{k}(y) \to h^{**}(y)$ for all $y\in G_1'$. 

{\underline{Step 1. $h^{**}(f(x))=x$ a.e.}:}
By Lemma~\ref{lem:h-monotone}, we know that 
$\osc_{B(y,r)}h^{**}\underset{r\to 0}{\longrightarrow}0$ for a.e.~$y\in\Omega'$. Since $J_{f}>0$ a.e.~(and therefore $f$ satisfies the $\N^{-1}$ condition), \linebreak
$\osc_{B(f(x),r)}h^{**}\underset{r\to 0}{\longrightarrow} 0$ for a.e.~$x\in\Omega$.

Let $G_{1} \subset f^{-1}(G_1')$ be a set such that $|G_{1}|=|\Omega|$ and for all $x\in G_1$ it holds that $f_k(x)\to f(x)$ and $\osc_{B(f(x),r)}h^{**}\underset{r\to 0}{\longrightarrow} 0$.

For $x\in G_1$ and $r>0$, by the pointwise convergence of $f_{k}$ in $x\in G_{1}$ and $f^{-1}_{k}$ in $f(x)\in G_{1}'$, we can find $k_0\in\mathbb{N}$ big enough such that 
$$
    f_k(x)\in B(f(x), r) \quad \text{ and } \quad f_k^{-1}(f(x))\in B(h^{**}(f(x)), r)
$$
for all $k\geq k_0$.
Moreover, by \cite[Lemma 2.9]{MulSpe1995} (though it is formulated for the precise representative $h^{*}$, it holds also for the super-precise representative $h^{**}$ with analogous proof), 
there exists a subsequence $\{f_{k_j}\}_{j \in \mathbb{N}}$ (that depends on $r$) and a number $j_{0} \in \mathbb{N}$ big enough such that
$$
    \osc_{S(f(x), r)} f_{k_j}^{-1}\leq \osc_{S(f(x), r)} h^{**} + r
$$ 
for all $j\geq j_0$.

Then we have
\begin{align*}
    |f_{k_j}^{-1}(f_{k_j}(x)) - h^{**}(f(x))| &\leq |f_{k_j}^{-1}(f_{k_j}(x)) - f_{k_j}^{-1}(f(x))|  + |f_{k_j}^{-1}(f(x)) - h^{**}(f(x))| \\
    & \leq  \osc_{B(f(x), r)} f_{k_j}^{-1} + r \leq \osc_{S(f(x), r)} f_{k_j}^{-1} + r \\
    & \leq \osc_{S(f(x), r)} h^{**} +r +r   \leq \osc_{B(f(x), 2r)} h^{**} + 2r.
\end{align*}
Therefore, by definition of $G_1$,
$$
    |x - h^{**}(f(x))| = |f_{k_j}^{-1}(f_{k_j}(x)) - h^{**}(f(x))| \leq \lim\limits_{r\to 0} (\osc_{B(f(x), 2r)} h^{**} + 2r) =0
$$
for all $x\in G_1$, which concludes Step 1.

{\underline{Step 2.  $f(h^{**}(y))=y$ a.e.}:}
We know that $h^{**}$ is injective a.e.\ on $G_1'$ and both $f$ and $h^{**}$ satisfies the $\N^{-1}$ condition, so when we set 
$$
G_2':=\left(G'_1\cap (h^{**})^{-1}(G_1)\right)\setminus (h^{**})^{-1}(f^{-1}(\Omega'\setminus G_1')),
$$ 
we know it is a set of full measure. Let us take $y\in G_2'$. 
Since $f_k^{-1}$ is a homeomorphism onto $\Omega$, we can find $y_k\in\Omega'$ such that $f_k^{-1}(y_k)=h^{**}(y)$. Therefore,
$$
y_k=f_k(f_k^{-1}(y_k))=f_k(h^{**}(y))\to f(h^{**}(y)),
$$
so $y_k$ converges to some $\tilde{y}=f(h^{**}(y))$. We apply $h^{**}$ to both sides to get $h^{**}(\tilde{y}) = h^{**}(f(h^{**}(y)))$. From $y\in G_2'$ we have that $h^{**}(y)\in G_1$. Since $h^{**}(f(x))=x$ on $G_1$ we get 
$
h^{**}(\tilde{y})= h^{**}(f(h^{**}(y)))=h^{**}(y).
$
Now we can have either $\tilde{y} \in G_1'$ or $\tilde{y}\notin G_1'$. In the first case, $\tilde{y}=y$ as $h^{**}$ is injective on $G_1'$, so $f(h^{**}(y))=y$. 
In the other case, $f(h^{**}(y))\in \Omega'\setminus G_1'$, which is a contradiction to $y\in G_2'$.

\end{proof}
\begin{remark}
If $p>n$, equality $h^{**}(f(x))=x$ can be derived easily from
$$
    |x - h^{**}(f(x))| \leq 
    |f_{k}^{-1}(f_{k}(x)) - f_{k}^{-1}(f(x))| + |f_{k}^{-1}(f(x)) - h^{**}(f(x))|,
$$
using  uniform convergence $f_k^{-1}\rightrightarrows h^{**}$ (up to subsequence) and the Morrey inequality for $f_k^{-1}$.
The other relation $f(h^{**}(y))=y$ follows the same way as above. 

\end{remark}
\begin{remark}
Since both $f$ and $h$ satisfy the $\N^{-1}$ condition, the identities $h(f(x))=x$ a.e.~in $\Omega$ and $f(h(y))=y$ a.e.~in $\Omega'$ hold for arbitrary representatives.
\end{remark}

%=======================================================================================

\section{Differentiability}

First, let us notice the following well-known fact. 

\begin{lemma}\label{lem:h-diff}
    Let $n\geq 2$, $p>n-1$ and $\Omega'$ be a bounded domain in $\mathbb{R}^n$.
    If $h\in W^{1,p}_{\loc}(\Omega',\rn)$
    satisfies the strong $\INV$ condition, 
    then $h$ is differentiable a.e.~in $\Omega'$.
\end{lemma}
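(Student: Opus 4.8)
The statement to prove is Lemma~\ref{lem:h-diff}: a mapping $h\in W^{1,p}_{\loc}(\Omega',\rn)$ with $p>n-1$ satisfying the strong $\INV$ condition is differentiable a.e. The natural tool is the Stepanov theorem, which reduces the problem to showing that
\[
    \limsup_{r\to 0^+}\frac{\osc_{B(y,r)} h}{r}<\infty
\]
for a.e.\ $y\in\Omega'$. The oscillation estimate from the second part of Lemma~\ref{lem:h-monotone} is exactly tailored to this: for every $y\in\Omega'$ and every $r\in(0,r_y/2)$ one has
\[
    \osc_{B(y,r)} h \leq C r\Bigl(r^{-n}\int_{B(y,2r)}|Dh|^p\Bigr)^{1/p}.
\]

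**Key steps.** First I would invoke Lemma~\ref{lem:h-monotone}, which applies because $h$ satisfies the strong $\INV$ condition and is locally $W^{1,p}$ with $p>n-1$; this gives the displayed oscillation bound. Dividing by $r$, we get
\[
    \frac{\osc_{B(y,r)} h}{r} \leq C\Bigl(r^{-n}\int_{B(y,2r)}|Dh|^p\Bigr)^{1/p}
    = C\, 2^{n/p}\Bigl(\frac{1}{|B(y,2r)|}\int_{B(y,2r)}|Dh|^p\Bigr)^{1/p}.
\]
Second, I would use the Lebesgue differentiation theorem applied to the function $|Dh|^p\in L^1_{\loc}(\Omega')$: for a.e.\ $y\in\Omega'$ the averages on the right converge to $|Dh(y)|^p<\infty$ as $r\to0^+$, hence stay bounded. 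Therefore $\limsup_{r\to0^+}\osc_{B(y,r)}h/r<\infty$ for a.e.\ $y$, and Stepanov's theorem yields differentiability of $h$ a.e.\ in $\Omega'$.

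**Main obstacle.** Honestly, there is no serious obstacle — the lemma is labeled "well-known" precisely because all the work is hidden in Lemma~\ref{lem:h-monotone} (which in turn rests on the Sobolev embedding on spheres and the monotonicity for a.e.\ radius coming from strong $\INV$). The only mild point of care is making sure the oscillation estimate, which holds only for $r<r_y/2$ with a $y$-dependent threshold $r_y$, is still enough to feed into Stepanov: it is, since Stepanov only needs the $\limsup$ as $r\to0^+$, which is governed by arbitrarily small radii. One should also note $\osc_{B(y,r)}h$ is finite for small $r$ (again a consequence of Lemma~\ref{lem:h-monotone} together with the finiteness of the Sobolev integral on a.e.\ sphere), so the quantities involved are well-defined. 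Modulo these routine remarks, the proof is a two-line combination of Lemma~\ref{lem:h-monotone}, Lebesgue differentiation, and Stepanov.
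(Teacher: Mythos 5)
Your proof is correct and follows the same route as the paper: invoke Lemma~\ref{lem:h-monotone} for the oscillation estimate, divide by $r$, apply the Lebesgue differentiation theorem to $|Dh|^p$, and conclude via Stepanov.
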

\begin{proof}
By Lemma \ref{lem:h-monotone} we have
$$
    \osc_{B(y,r)} h \leq C r \left(r^{-n}  \int_{B(y,2r)} |Dh|^p\right)^{1/p}, 
$$

which implies by setting $r=|z-y|$ that  
$$
\limsup_{z\to y}\frac{|h(z)-h(y)|}{|z-y|}\leq C |Dh(y)| <\infty
$$
for any Lebesgue point $y$ of $|Dh|^p$
and, therefore, $h$ is differentiable a.e.~by the Stepanov theorem \cite{Ste1923}, see also \cite[Theorem 2.23]{HenKos2014}.

\end{proof}

We also need the following modification of \cite[Lemma A.29]{HenKos2014}, which gives us the 
a.e.-differentiability  
of mapping $f$ from Theorem \ref{thm:main} -- but the derivative is only with respect to a set of full measure.
\begin{lemma}\label{lem:A29}
    Let $n\geq 2$ and $\Omega$, $\Omega'$ be bounded domains in $\mathbb{R}^n$.   
    Let $\Lambda\subset\Omega$, $\Lambda'\subset\Omega'$ be sets of full measure and $h\colon\Omega'\to\Omega$ such that $h\colon\Lambda'\to \Lambda=h(\Lambda')$ is differentiable with respect to the relative topology in $\Lambda'$, i.e.~induced by the topology in $\rn$, 
    and $J_{h}(y) > 0$ for all $y\in \Lambda'$.
    Assume also that $h|_{\Lambda'}$ 
    is injective, and the inverse mapping $f:=h^{-1}$ is continuous in $\Lambda$ with respect to the relative topology in $\Lambda$.
    Then $f$ is differentiable on $\Lambda$ with respect to the relative topology in $\Lambda$ and $Df(x)=\left(Dh(f(x))\right)^{-1}$ for all $x\in\Lambda$.
\end{lemma}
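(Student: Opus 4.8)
The plan is to reduce the statement to the classical inverse-function theorem applied pointwise, using differentiability of $h$ at a fixed point $y_0=f(x_0)\in\Lambda'$ to produce a local linear inverse of $h$ near $y_0$ and then checking that this linear map is indeed the derivative of $f$ at $x_0$ relative to $\Lambda$. The only subtlety compared to the usual finite-dimensional inverse function theorem is that $h$ is merely differentiable at the single point $y_0$ (with respect to the relative topology in $\Lambda'$), not $C^1$ on a neighbourhood, so we cannot invoke the contraction-mapping version; instead we argue directly from the definition of derivative together with continuity of $f$ on $\Lambda$.

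First I would fix $x_0\in\Lambda$, set $y_0=f(x_0)\in\Lambda'$, and write $A=Dh(y_0)$, which is invertible since $J_h(y_0)=\det A>0$. Differentiability of $h$ at $y_0$ relative to $\Lambda'$ means that for $y\in\Lambda'$,
\[
    h(y)=h(y_0)+A(y-y_0)+o(|y-y_0|)\qquad\text{as }y\to y_0,\ y\in\Lambda'.
\]
Now take $x\in\Lambda$ and put $y=f(x)\in\Lambda'$, so that $h(y)=x$ and $h(y_0)=x_0$. Substituting gives
\[
    x-x_0 = A\bigl(f(x)-f(x_0)\bigr)+o\bigl(|f(x)-f(x_0)|\bigr),
\]
hence, applying $A^{-1}$,
\[
    f(x)-f(x_0)=A^{-1}(x-x_0)+A^{-1}\,o\bigl(|f(x)-f(x_0)|\bigr).
\]
To conclude $f(x)-f(x_0)=A^{-1}(x-x_0)+o(|x-x_0|)$ it suffices to control $|f(x)-f(x_0)|$ by a constant multiple of $|x-x_0|$ for $x$ close to $x_0$ in $\Lambda$. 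This is the step I expect to be the main obstacle, and it is exactly where the continuity of $f=h^{-1}$ on $\Lambda$ enters.

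To establish the local Lipschitz-type bound, note that from the displayed $o$-expansion of $h$ there is $\delta>0$ such that $|h(y)-h(y_0)|\ge \tfrac12\|A^{-1}\|^{-1}\,|y-y_0|$ whenever $y\in\Lambda'$ and $|y-y_0|<\delta$; equivalently $|y-y_0|\le 2\|A^{-1}\|\,|h(y)-h(y_0)|$ on that set. Since $f$ is continuous at $x_0$ relative to $\Lambda$, there is $\rho>0$ so that $x\in\Lambda$, $|x-x_0|<\rho$ forces $|f(x)-y_0|<\delta$; for such $x$ the previous inequality with $y=f(x)$ reads $|f(x)-f(x_0)|\le 2\|A^{-1}\|\,|x-x_0|$. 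Plugging this into the error term above yields $f(x)-f(x_0)=A^{-1}(x-x_0)+o(|x-x_0|)$, i.e.\ $f$ is differentiable at $x_0$ relative to $\Lambda$ with $Df(x_0)=A^{-1}=\bigl(Dh(f(x_0))\bigr)^{-1}$. Since $x_0\in\Lambda$ was arbitrary this proves the lemma; in fact the argument only used differentiability of $h$ and positivity of $J_h$ at the single point $f(x_0)$, plus injectivity of $h|_{\Lambda'}$ (to make $f$ well defined) and continuity of $f$ at that point, so it is a pointwise statement that we will later apply at a.e.\ $x\in\Omega$ once the relevant full-measure sets $\Lambda,\Lambda'$ are identified.
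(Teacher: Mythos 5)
Your proof is correct and follows essentially the same approach as the paper: substitute $y=f(x)$ into the first-order expansion of $h$ at $y_0=f(x_0)$, invert $A=Dh(y_0)$, and use continuity of $f$ at $x_0$ to obtain the two-sided comparability $|f(x)-f(x_0)|\lesssim|x-x_0|$ needed to convert $o(|f(x)-f(x_0)|)$ into $o(|x-x_0|)$. If anything you spell out the key Lipschitz-type estimate more carefully than the paper does (which writes it with an informal $\approx$).
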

\begin{proof} 
Since $h\colon \Lambda' \to \Lambda$ is a homeomorphism,
the proof of this lemma follows the lines of the proof of \cite[Lemma A.29]{HenKos2014}. We present it here for the convenience of the reader.

By the differentiability of $h$ we know that for $y\in\Lambda'$
\begin{equation}\label{eq:diff}
    \lim\limits_{\bar{y}\to y, \, \bar{y}\in\Lambda'} \frac{h(\bar{y})-h(y)-Dh(y)(\bar{y}-y)}{|\bar{y}-y|}=0.
\end{equation}

For $\bar{x}$, $x \in \Lambda$ denote $\bar{y}=f(\bar{x})$, $y=f(x) \in \Lambda'$, then
$$
    h(\bar{y})-h(y)=h(f(\bar{x}))-h(f(x)) = \bar{x}-x.
$$
Since $J_h(y) >0$ we obtain for $\bar{y}$ close enough to $y$  that
$$
|\bar{x}-x|=|h(\bar{y})-h(y)|\approx |Dh(y)(\bar{y}-y)|\approx|\bar{y}-y|.
$$

Then from \eqref{eq:diff} it follows
\begin{align*}
    0 =& \lim_{\bar{y}\to y, \, \bar{y}\in\Lambda'}\frac{\left(Dh(y)\right)^{-1}\left( h(\bar{y})-h(y)-Dh(y)(\bar{y}-y)\right)}{|y'-y|} = \\
    & \lim_{{\bar{y}}\to y, \, \bar{y}\in\Lambda'}\frac{\left(Dh(y)\right)^{-1}\left( h(\bar{y})-h(y)\right)-(\bar{y}-y)}{|y'-y|} \approx \\
    & \lim_{\bar{x}\to x, \, \bar{x}\in\Lambda}\frac{\left(Dh(f(x))\right)^{-1}\left(\bar{x}-x\right)-(f(\bar{x})-f(x))}{|\bar{x}-x|},
\end{align*}
which concludes the proof.

\end{proof}

The following proposition is a version of an inverse function theorem.
\begin{prop}\label{prop:h-ball}
    Let $n\geq 2$, $p>n-1$, $\Omega$ and $\Omega'$ be bounded domains in $\mathbb{R}^n$,
    $\Lambda \subset \Omega$ and $\Lambda' \subset \Omega'$ be sets of full measure and $h\in W^{1,p} (\Omega', \Omega)$ satisfy the strong $\INV$ condition and be differentiable with $J_{h}(y)>0$ for any $y\in\Lambda'$.
    Assume also that the restriction $h|_{\Lambda'}\colon \Lambda' \to \Lambda$ is one-to-one 
    for any $y\in\Lambda'$.
    Then for any $y_{0}\in \Lambda'$ there exists a sequence $\{r_m\}_{m\in \mathbb{N}}\searrow 0$ such that the topological image $h^T(B(y_0,r_m))$ contains $B\left(h(y_0),\frac{r_m}{3}\right)$.
\end{prop}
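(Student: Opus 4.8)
The plan is to combine the local differentiability of $h$ at the point $y_0$ with a degree-theoretic argument. Since $h$ is differentiable at $y_0 \in \Lambda'$ with $J_h(y_0) > 0$, near $y_0$ the map $h$ behaves like the affine map $y \mapsto h(y_0) + Dh(y_0)(y - y_0)$ up to a small error. Concretely, there is a radius $\rho_0 > 0$ such that for all $y$ with $|y - y_0| \le \rho_0$ we have $|h(y) - h(y_0) - Dh(y_0)(y-y_0)| \le \tfrac{1}{3}|Dh(y_0)|^{-1,-1}\cdot |y-y_0|$, or, more usefully, by invertibility of $Dh(y_0)$ there is $c_0 > 0$ with $|h(y) - h(y_0)| \ge c_0 |y - y_0|$ for $|y-y_0|$ small; choosing the error threshold appropriately we can guarantee that $|h(y) - h(y_0)| \ge \tfrac{r}{3}$ for all $y \in S(y_0, r)$ whenever $r$ is small enough. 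This keeps the sphere $h(S(y_0,r))$ outside the ball $B(h(y_0), r/3)$.

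The next step is to control the degree. Using the strong $\INV$ condition, pick from Definition \ref{def:strong-inv} the null set $N_{y_0}$ and radius $r_{y_0}$; then choose a sequence $r_m \searrow 0$ with $r_m \notin N_{y_0}$, $r_m$ small enough that the differentiability estimate above applies on $\overline{B(y_0, r_m)}$, and $h|_{S(y_0, r_m)}$ continuous. On such a sphere, $h$ is a small continuous perturbation of the linear isomorphism $L := Dh(y_0)$ (translated to $h(y_0)$); a standard homotopy $h_t := (1-t)\big(h(y_0) + L(\cdot - y_0)\big) + t\, h$ on $S(y_0, r_m)$ never hits $h(y_0)$ provided the perturbation is smaller than $\tfrac{r_m}{3} \le \mathrm{dist}(h(y_0), (h(y_0)+L(S(y_0,r_m))))$, hence by homotopy invariance of the degree,
\[
    \deg(h, S(y_0, r_m), z) = \deg\big(h(y_0) + L(\cdot - y_0),\, S(y_0, r_m),\, z\big) = \mathrm{sgn}(J_h(y_0)) = 1
\]
for every $z \in B(h(y_0), r_m/3)$, since the whole ball $B(h(y_0), r_m/3)$ lies in the bounded component of the complement of the ellipsoid $h(y_0) + L(S(y_0,r_m))$ and the degree is locally constant there. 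This gives $B(h(y_0), r_m/3) \subset h^T(B(y_0, r_m))$ by the very definition of the topological image, and it remains to note these $z$ do not lie on $h(S(y_0,r_m))$, which is exactly what the estimate $|h(y) - h(y_0)| \ge r_m/3$ on the sphere provides.

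The main obstacle is making the perturbation estimate quantitative enough to survive along a full sequence $r_m \to 0$: differentiability at a single point gives control of $|h(y) - h(y_0) - L(y-y_0)|$ only as $o(|y - y_0|)$, so on $S(y_0, r_m)$ the error is $o(r_m)$, which is indeed eventually smaller than the gap $\sim c_0 r_m$ coming from invertibility of $L$ — but one must be careful that the constant in the lower bound $|L(y - y_0)| \ge c_0 |y - y_0|$ is fixed (it is $\|L^{-1}\|^{-1}$, independent of $m$) and that the fraction $\tfrac{1}{3}$ is consistent with these constants; shrinking $r_{y_0}$ further if necessary handles this. A secondary point is that the strong $\INV$ condition is needed only to ensure $h|_{S(y_0,r_m)}$ is continuous for our chosen radii (so that the degree is defined) and that the topological image is well-behaved; the injectivity of $h|_{\Lambda'}$ and $J_h > 0$ on $\Lambda'$ are not really used in this particular statement beyond guaranteeing $L = Dh(y_0)$ is invertible with positive determinant, so the argument is essentially local.
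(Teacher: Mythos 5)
Your proof takes essentially the same route as the paper: use differentiability of $h$ at $y_0$ with $J_h(y_0)>0$ to control the deviation from the linearization $L=Dh(y_0)$ on small spheres, then invoke homotopy invariance of the Brouwer degree along the segment from the affine map to $h$ to conclude that the degree is $1$ on a small concentric ball, hence that ball is in $h^T(B(y_0,r_m))$. The only difference is cosmetic: the paper first normalizes $Dh(y_0)=\mathrm{Id}$ by a linear change of variables, which makes the homotopy estimate trivially quantitative with the constant $1/3$, while you keep $L$ general.

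One small caution on your closing remark. You note that the fraction $1/3$ must be consistent with $\|L^{-1}\|^{-1}$ and say "shrinking $r_{y_0}$ further if necessary handles this." It does not: shrinking $r$ does not change the ratio $|L(y-y_0)|/|y-y_0|\geq\|L^{-1}\|^{-1}$, so if $\|L^{-1}\|>3$ the estimate $|h(y)-h(y_0)|\geq r/3$ on $S(y_0,r)$ fails for every small $r$. In that case the argument (and, read literally, the paper's normalization too, since undoing the change of variables turns the ball $B(0,r_m/3)$ into an ellipsoid) produces a ball $B\bigl(h(y_0),\, c(y_0)\,r_m\bigr)$ with $c(y_0)=\bigl(3\|Dh(y_0)^{-1}\|\bigr)^{-1}$, not necessarily $r_m/3$. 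This is harmless for the paper's subsequent use in Theorem~\ref{thm:diff}, which only needs \emph{some} positive constant, but the correct fix is to replace $1/3$ by $c(y_0)$ (or to enlarge the sphere radii) rather than to shrink $r_{y_0}$.
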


\vspace{-4\baselineskip}
\begin{figure}[h t p]
\hspace{-15pt}
\begin{minipage}{1\textwidth}
\begin{tikzpicture}[scale=0.8]
%% Blob
\draw[red] (-3,3.5) circle (1.2);
\path[draw,use Hobby shortcut,closed=true]
(-2.5,0) .. (0,1) .. (-0.5,3) .. (-0.2,4) .. (-3.5,2) .. (-1.5,.5);
\node at (-1.1,0.5) {$\Omega'$};
\node[red] at (-1.3,4.6) {$S(0,r_m)$};

\draw[->] (1,2)--(3,2);
\node at (2,2.3) {$h$};

\path[draw,use Hobby shortcut,closed=true]
(4.5,-1) .. (7,1) .. (5.5,6) .. (5,5) .. (3.5,4) .. (4.5,0.5);

\draw[red] (5.5,3.5) circle (1.2);
\draw[dotted] (5.5,3.5) circle (0.6);
\draw[gray, fill=gray] (5.5,3.5) circle (0.4);
\path[blue,draw,use Hobby shortcut,closed=true]
(5.5,2.6) .. (6.2,4) .. (5.5,4.4) .. (5,4.8) .. (4.5,4) .. (4.5,2.5);
\node[red] at (5.9,1.9) {$S(0,r_m)$};
\node[blue] at (3.6,5.1) {$h(S(0,r_m))$};

\node at (4.9,0.5) {$\Omega$};

\end{tikzpicture}
\end{minipage}
\caption{Mapping $h$ maps the red sphere $S(0,r_m)$ to $h(S(0,r_m))$ (blue); 
the grey ball $B(0,r_m/3)$ does not intersect $h(S(0,r_m))$, since 
its distance from $0$ 
is at least $r_m/2$ (denoted by the dotted sphere).}\label{fig_degree}
\end{figure}
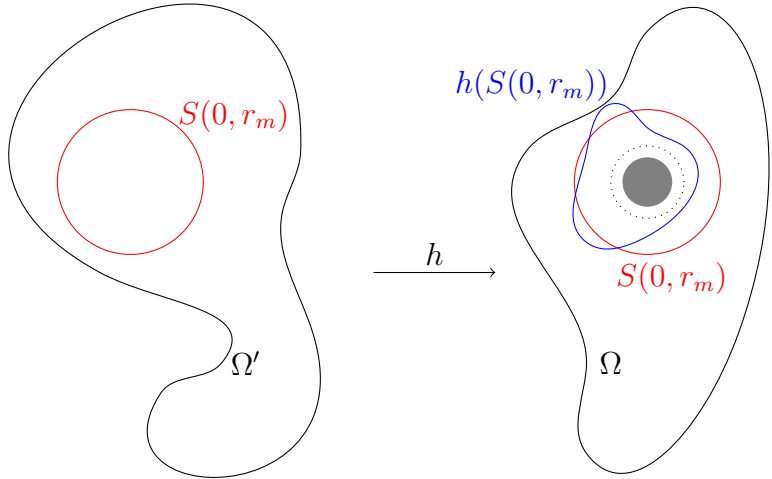

\begin{proof}
Without loss of generality, by a translation and a linear change of variables, we may assume that 
$y_0=0$, $h(y_0)=0$, and $Dh(y_0)=Id$. 
Since $h$ is differentiable at $0$, it holds that 
$h(y) = y + o(|y|)$ if $y\to 0$.
That means that there exists  $r_{0}>0$ such that 
\begin{equation}\label{neq:h_diff}
    |h(y)-y|\leq\frac{|y|}{2} \quad \text{for all } y\in B(0,r_0)\subset\Omega'. 
\end{equation}
Consider a sequence $\{r_m\}_{m\in \mathbb{N}}\searrow 0$ such that $h$ is continuous on $S(0,r_m)$
and Definition~\ref{def:strong-inv} (i--ii) is fulfilled.
Let now $z\in B\left(0,\frac{r_m}{3}\right)\subset\Omega$,
the inequality \eqref{neq:h_diff} implies
$z\notin h(S(0,r_m))$. 
Since $\dist(z,S(0,r_m))> r_m/2$, from \eqref{neq:h_diff} we know that $1=\deg(z,Id, S(0,r_m))=\deg(z,h, S(0,r_m))$. Therefore, $B\left(0, \frac{r_m}{3}\right) \subset h^T(B(0,r_m))$, see Figure \ref{fig_degree} for illustration.
\end{proof}

The closing theorem of this section concludes the differentiability part of Theorem~\ref{thm:main}.

\begin{thm} \label{thm:diff}
Let $n\geq 2$, $p>n-1$, $\Omega$ and $\Omega'$ be bounded domains in $\mathbb{R}^n$ and 
$f_k\in W^{1,n-1}(\Omega, \rn)$ be homeomorphisms of $\overline\Omega$ onto $\overline{\Omega'}$  with $J_{f_k}>0$. Let  $f\colon\Omega\to\rn$ be a weak limit of $\{f_k\}_{k\in\mathbb{N}}$ in $W^{1,n-1}(\Omega,\rn)$ with $J_f>0$~a.e.
Assume also that 
the sequence 
$\{f_k^{-1}\}_{k\in\mathbb{N}}$ 
converges $W^{1,p}$-weakly to $h\colon \Omega' \to \mathbb{R}^{n}$ with $J_h>0$ a.e.
Then $h^{**}$ is differentiable a.e.~in $\Omega'$ and $\tilde{f}$ is differentiable a.e.~in $\Omega$.
\end{thm}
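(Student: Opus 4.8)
The plan is to prove the two differentiability assertions separately. For $h^{**}$ there is almost nothing to do: exactly as in the proof of Lemma~\ref{lemma:injectivity}, the super-precise representative $h^{**}$ of the $W^{1,p}$-weak limit $h$ satisfies the strong $\INV$ condition (see \cite[Theorem 5.2 and Lemma 5.3]{BouHenMol2020}), and since $h\in W^{1,p}_{\loc}(\Omega',\rn)$ with $p>n-1$, Lemma~\ref{lem:h-diff} immediately yields that $h^{**}$ is differentiable at almost every point of $\Omega'$ (with $J_{h^{**}}>0$ a.e.). When $p>n$ one replaces $h^{**}$ by the continuous representative and the argument simplifies, cf.\ Remark~\ref{rem:p>n}, so from now on I would assume $n-1<p\le n$.

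For $\tilde f$ the strategy is to identify $f$, up to a null set, with the inverse of $h^{**}$ and then to feed this into Lemma~\ref{lem:A29} and Proposition~\ref{prop:h-ball}. First I would fix the representative of $f$ produced in Lemma~\ref{lemma:injectivity}, so that $h^{**}(f(x))=x$ for a.e.\ $x$ and $f(h^{**}(y))=y$ for a.e.\ $y$, and choose a set of full measure $\Lambda'\subset\Omega'$ on which $h^{**}$ is differentiable with $J_{h^{**}}>0$, $h^{**}$ is injective, and $f\circ h^{**}=\mathrm{id}$ holds pointwise. Set $\Lambda:=h^{**}(\Lambda')\cap\Omega$. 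Since $J_f>0$ a.e.\ forces $f$ to satisfy the $\N^{-1}$ condition, the set $f^{-1}(\Omega'\setminus\Lambda')$ is null, so for a.e.\ $x\in\Omega$ one has $x=h^{**}(f(x))\in h^{**}(\Lambda')$, whence $|\Lambda|=|\Omega|$; moreover, if $x=h^{**}(y)\in\Lambda$ with $y\in\Lambda'$ then $f(x)=f(h^{**}(y))=y$, so $f|_\Lambda=(h^{**}|_{\Lambda'})^{-1}\colon\Lambda\to\Lambda'$, and $h^{**}|_{\Lambda'}$, $f|_\Lambda$ meet all hypotheses of Lemma~\ref{lem:A29} except, a priori, the continuity of $f|_\Lambda$.

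Establishing that continuity is, in my view, the crux of the argument, and it is precisely what Proposition~\ref{prop:h-ball} is designed for. Fix $x_0=h^{**}(y_0)\in\Lambda$, $y_0\in\Lambda'$, and $\varepsilon>0$. By Proposition~\ref{prop:h-ball} applied to $h^{**}$ there is an index $m$ with $r_m<\varepsilon$, $B(y_0,r_m)\subset\Omega'$, and $B(x_0,\tfrac{r_m}{3})\subset h^T(B(y_0,r_m))$. If $x\in\Lambda\cap B(x_0,\tfrac{r_m}{3})$ and we write $x=h^{**}(y)$ with $y\in\Lambda'$, then $h^{**}(y)=x\in h^T(B(y_0,r_m))$; but part (ii) of the strong $\INV$ condition says $h^{**}(z)\notin h^T(B(y_0,r_m))$ for every $z\in\Omega'\setminus B(y_0,r_m)$, so necessarily $y\in B(y_0,r_m)$ and hence $|f(x)-f(x_0)|=|y-y_0|<r_m<\varepsilon$. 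Thus $f|_\Lambda$ is continuous with respect to the relative topology of $\Lambda$, Lemma~\ref{lem:A29} applies, and $f|_\Lambda$ is differentiable at every point of $\Lambda$ in the relative topology, with $Df(x)=(Dh^{**}(f(x)))^{-1}$.

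Finally I would promote this relative differentiability to genuine a.e.-differentiability of the hyper-precise representative $\tilde f$ on $\Omega$ via Stepanov's theorem; this is the one remaining technical point. Since $\tilde f=f$ a.e.\ and $f=(h^{**}|_{\Lambda'})^{-1}$ on $\Lambda$, for a.e.\ $x_0\in\Omega$ one has $x_0\in\Lambda$, $\tilde f(x_0)=f(x_0)=:y_0$, and $f|_\Lambda$ is relatively differentiable at $x_0$ with some derivative $A$. Given $\varepsilon>0$, relative differentiability yields $\rho_0>0$ with $f(B(x_0,\rho)\cap\Lambda)\subset\overline{B(y_0,(|A|+\varepsilon)\rho)}$ for all $\rho<\rho_0$; since $\Lambda$ has full measure and the right-hand ball is convex, for every $x\in B(x_0,\tfrac\rho2)$ and every $r<\tfrac\rho2$ the average $\fint_{B(x,r)}\tilde f=\fint_{B(x,r)}f$ lies in $\overline{B(y_0,(|A|+\varepsilon)\rho)}$, hence so does $\tilde f(x)=\limsup_{r\to0^+}\fint_{B(x,r)}f$. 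Consequently $\osc_{B(x_0,\rho/2)}\tilde f\le 2(|A|+\varepsilon)\rho$, which forces $\limsup_{x\to x_0}\frac{|\tilde f(x)-\tilde f(x_0)|}{|x-x_0|}<\infty$ for a.e.\ $x_0\in\Omega$. Stepanov's theorem \cite{Ste1923} then gives that $\tilde f$ is differentiable a.e.\ in $\Omega$, completing the proof.
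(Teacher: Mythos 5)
Your proof is correct, and it reproduces the overall skeleton of the paper's argument (differentiability of $h^{**}$ via Lemma~\ref{lem:h-diff}, construction of a full-measure set $\Lambda$ on which $f=(h^{**}|_{\Lambda'})^{-1}$, continuity of $f|_\Lambda$ via Proposition~\ref{prop:h-ball} combined with the strong \INV\ condition, then Lemma~\ref{lem:A29}). However, it diverges in two places worth pointing out. First, for continuity of $f|_\Lambda$ (paper's Step~2) the paper argues by contradiction --- assuming $f(x_k)\nrightarrow f(x_0)$ and producing a positive-measure set outside $B(y_0,r_m)$ mapped into the topological image --- whereas you argue directly: $x\in B(x_0,r_m/3)\subset (h^{**})^T(B(y_0,r_m))$ together with part (ii) of the strong \INV\ condition forces the $\Lambda'$-preimage $y$ into $B(y_0,r_m)$. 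Your version is cleaner and in fact closer to what the strong \INV\ condition literally says. Second, and more substantially, the paper's Step~3 upgrades the relative differentiability on $\Lambda$ to differentiability of $\tilde f$ on all of $\Omega$ at \emph{every} $x_0\in\Lambda$, with the explicit derivative $D\tilde f(x_0)=Df|_\Lambda(x_0)$, by a careful coordinate-wise argument that uses absolute continuity of $\tilde f_i$ on almost every line and the intermediate value property to locate points $x_k^i\in\Lambda$ approximating the average of $f_i$ near an arbitrary $x'$. You instead pass through Stepanov's theorem: from the relative-differentiability estimate $f(B(x_0,\rho)\cap\Lambda)\subset\overline{B(y_0,(|A|+\varepsilon)\rho)}$ and convexity of balls you bound $\fint_{B(x,r)}f$, hence $\tilde f(x)$, hence $\osc_{B(x_0,\rho/2)}\tilde f$, and conclude a pointwise Lipschitz bound that Stepanov converts into a.e.-differentiability. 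This is shorter and perfectly adequate for the statement as written, though (a) it yields only a.e.-differentiability rather than differentiability at every $x_0\in\Lambda$ with the identified derivative, and (b) a small imprecision: since \eqref{def:hyper-precise} is a coordinate-wise $\limsup$, $\tilde f(x)$ lands in $\overline{B(y_0,\sqrt n(|A|+\varepsilon)\rho)}$ rather than $\overline{B(y_0,(|A|+\varepsilon)\rho)}$ --- harmless, but worth stating, since the extra $\sqrt n$ is absorbed into the constant in the Lipschitz bound.
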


\begin{proof}
We again pass to a subsequence (if needed) so that $f_k\to f$ and $f_k^{-1}\to h$ pointwise a.e. Since $h$ is a $W^{1,p}$-weak limit of Sobolev homeomorphisms with $p>n-1$, the super-precise representative $h^{**}$ satisfies 
the strong $\INV$ condition \cite[Theorem 5.2 and Lemma 5.3]{BouHenMol2020}, 
is injective a.e.\ (see \cite[Lemma 3.4]{MulSpe1995} and \cite[Theorem 1.2]{BouHenMol2020})
and continuous on almost all spheres \cite[Lemma 2.19]{HenKos2005}. 
By Lemma~\ref{lem:h-diff}, $h$ is differentiable a.e.~in $\Omega'$.
Moreover, since $J_{h}(y) > 0$ a.e.~in $\Omega'$, by the change-of-variable formula we conclude that $h$ satisfies the $\N^{-1}$ condition.

{\underline{Step 1. Finding sets $\Lambda$, $\Lambda'$}:} 
Let $f$ be an arbitrarily fixed representative, and let us
introduce \textit{good} sets $G\subset\Omega$, $G'\subset \Omega'$ as
\begin{equation*}
    G := \{x\in\Omega\: : \: h^{**}({f}(x))=x\} \subset \Omega \qquad \text{and} \qquad
    G':= \{y\in\Omega'\: : \: f(h^{**}(y))=y\} \subset \Omega'.
\end{equation*} 
It is easy to check
that
$f(G)=G'$, $h^{**}(G')=G$, and by Lemma~\ref{lemma:injectivity}, $|G|=|\Omega|$, $|G'|=|\Omega'|$.
And we define \textit{bad} sets $\Sigma \subset G$, $\Sigma'\subset G'$ as
\begin{align*}
    \Sigma:= & G \setminus \{x\in\Omega\: : \: J_f(x)>0, \: f_k(x)\to f(x)\},\\
    \Sigma':= & G' \setminus \{y\in\Omega'\: : \: %f(h^{**}(y))=y, 
    \: h^{**} \text{ is differentiable in } y, \: J_{h^{**}}(y)>0, \: f_{k}^{-1}(y) \to h^{**}(y)\}.
\end{align*} 
Clearly $|\Sigma|=|\Sigma'|=0$.
Then \textit{very good} sets $\Lambda\subset G$, $\Lambda'\subset G'$ are defined by
\begin{equation*}
    \Lambda':= G' \setminus (\Sigma' \cup f^{-1}(\Sigma)) \qquad \text{and} \qquad
    \Lambda := h^{**}(\Lambda').
\end{equation*} 
By Lemma~\ref{lemma:injectivity} and $\N^{-1}$ condition for $f$ and $h^{**}$, it is not difficult to see that
$|\Lambda'|=|G'|=|\Omega'|$,  $|\Lambda|=|\Omega|$ and $ f(\Lambda)=\Lambda'$.

{\underline{Step 2. $f|_{\Lambda}$ is continuous}:} 
The restriction $f|_{\Lambda}\colon\Lambda \to \Lambda'$ is continuous with respect to the relative topology in $\Lambda$.
Indeed,
let $f|_{\Lambda}$ be not continuous in some point $x_{0}\in\Lambda$, then there exists a sequence
$\{x_{k}\}_{k\in\mathbb N}\subset \Lambda$,
$x_{k} \to x_{0}$, but 
$f(x_{k})\nrightarrow f(x_{0})$. 
We set $y_{k}:=f(x_{k})\in \Lambda'$ and $y_0:= (h^{**})^{-1}(x_0)=f(x_0)$. 
Since $h^{**}|_{\Lambda'}=(f|_{\Lambda})^{-1}$, we have 
$h^{**}(y_{k}) \to h^{**}(y_{0})$,
but
$y_{k} \nrightarrow y_{0}$. 

By Proposition~\ref{prop:h-ball} there exists a sequence
$\{r_m\}_{m\in\mathbb{N}}\searrow 0$ such that 
$$
B\left(h^{**}(y_0),\frac{r_m}{3}\right) \subset (h^{**})^T(B(y_0,r_m)).
$$
Let $m$ and $k_0\in \mathbb{N}$ be big enough so that infinitely many $y_k$ are outside of $B(y_0,r_m)$ for $k\geq k_0$
and $h^{**}(y_{k_0})\in B\left(h^{**}(y_0), \frac{r_m}{6}\right)$. 
Passing to a subsequence, we can, for now, assume that $y_k\notin B(y_0, r_m)$ for all $k$.
Then we can find $r>0$ such that 
$$B(y_{k_0}, r)\cap B(y_0,r_m)=\varnothing
$$ 
and, since $h^{**}|_{\Lambda'}$ is continuous,
$$
h^{**}(B(y_{k_0},r)\cap \Lambda')\subset B\left(h^{**}(y_{k_0}), \frac{r_m}{6}\right).
$$
Summarizing the above, we obtain  
$$
h^{**}(B(y_{k_0},r)\cap \Lambda')\subset B\left(h^{**}(y_{k_0}),\frac{r_m}{6}\right)\subset B\left(h^{**}(y_0), \frac{r_m}{3}\right)\subset (h^{**})^T(B(y_0,r_m)).
$$
Thus, for every
$$z\in (B(y_{k_0}, r)\cap\Lambda' )\subset (\Omega' \setminus B(y_0,r_m))$$ it holds that
$h^{**}(z) \in (h^{**})^{T} (B(y_0,r_m))$,
the latter contradicts to the strong $\INV$ condition for $h^{**}$, since a set of positive measure $B(y_{k_0}, r)\cap\Lambda'$ from outside of the ball $B(y_0,r_m)$ is mapped inside the topological image of this ball.

Therefore, $f$ is continuous on $\Lambda$ with respect to the relative topology, and by Lemma~\ref{lem:A29}, we conclude that $f$ is differentiable on $\Lambda$ with respect to the relative topology.

\medskip
{\underline{Step 3. $\tilde{f}$ is differentiable a.e.}:}
It is left to show that a hyper-precise representative $\tilde{f}$, given by \eqref{def:hyper-precise}, is differentiable at $x_0\in \Lambda$ with respect to $\Omega$.
Since $\Lambda$ is a set of full measure and $f$ is continuous on $\Lambda$ with respect to the relative topology, any point $x\in \Lambda$ is a Lebesgue point of $f$, and therefore $\tilde{f}=f$ on $\Lambda$.

Fix $x_0\in\Lambda$ and $\varepsilon>0$. 
By differentiability of $f$ on $\Lambda$ with respect to the relative topology,
there exists $s>0$ such that for any $x\in B(x_0,s)\cap\Lambda$ it holds that
\begin{equation}\label{neq:diff_in_lambda}
\frac{|f(x)-f(x_0) -  Df(x_0)(x-x_0)|}{|x-x_0|}=\frac{|\tilde{f}(x)-\tilde{f}(x_0) -  Df(x_0)(x-x_0)|}{|x-x_0|}<\frac{\varepsilon}{2},
\end{equation}
where $Df(x_0)$ denotes the derivative $Df|_{\Lambda}(x_0)$ with respect to the relative topology.
To prove differentiability of $\tilde{f}$, we need to show that 
for an arbitrary $x'$ close to $x_{0}$ it holds that 
\begin{equation}\label{neq:diff_in_omega}
\frac{|\tilde{f}(x')-\tilde{f}(x_0) -  Df(x_0)(x'-x_0)|}{|x'-x_0|}<\varepsilon.
\end{equation}
If $x' \in \Lambda$, \eqref{neq:diff_in_omega} follows immediately from~\eqref{neq:diff_in_lambda}.
In the other case, roughly speaking, we want to find a point $z \in \Lambda$ 
such that 
$\frac{|\tilde{f}(x')- \tilde{f}(z)|}{|x'-x_0|}$
and 
$\frac{|x'- z|}{|x'-x_0|}$ are small,
and
so we can estimate
\begin{equation*}\label{neq:stepanov0}
\begin{aligned}
    &\frac{|\tilde{f}(x')-\tilde{f}(x_0)-Df(x_0)(x'-x_0)|}{|x'-x_0|}\\
    &\qquad \leq \frac{|\tilde{f}(x')- \tilde{f}(z)|+|Df(x_0)(x'-z) |}{|x'-x_0|}+\frac{| \tilde{f}(z)-\tilde{f}(x_0) -  Df(x_0)(z-x_0)|}{|x'-x_0|}
    < \varepsilon.\\
\end{aligned}
\end{equation*} 

Now we prove the above paragraph rigorously.
Let $x'\in B\left(x_0,\frac{s}{2}\right)$.
By \eqref{def:hyper-precise}, there exists a sequence 
$\{r_k\}_{k\in\mathbb{N}}\searrow 0$ such that 
$r_k<2^{-k}|x'-x_0|$ 
and
\begin{equation}\label{neq:Lebesgue_point}
    \left|\tilde{f}(x')-\frac{1}{|B(x',r_k)|}\int_{B(x',r_k)\cap\Lambda}  \tilde{f}(x) \dx \right|<2^{-k}|x'-x_0|.
\end{equation}
In the following, we proceed coordination-wise for $i\in\{1,\dots,n\}$.
Denote by $a_k^i$ and $b_k^i$ points in $B(x',r_k)\cap\Lambda$ such that
\begin{align}
     \tilde{f}_i(a_k^i)& \geq \frac{1}{|B(x',r_k)|} \int_{B(x',r_k)\cap\Lambda} f_i(x)\dx-2^{-k}|x'-x_0|, 
    \label{def:a_k}\\
     \tilde{f}_i(b_k^i)& \leq \frac{1}{|B(x',r_k)|}\int_{B(x',r_k)\cap\Lambda} f_i(x)\dx+2^{-k}|x'-x_0|.
    \label{def:b_k}
\end{align}
If there is an equality in \eqref{def:a_k} or \eqref{def:b_k}, we define
$x_k^i$ as $a_k^i$ or $b_k^i$, correspondingly. 
Otherwise, by continuity of $ \tilde{f}_i$ on $\Lambda$, there exist two balls 
$B(a_k^i, \rho(a_k^i))$ and $B(b_k^i, \rho(b_k^i))$, contained in $B(x',r_k)$, such that 
\eqref{def:a_k} holds for any $a \in B(a_k^i, \rho(a_k^i)) \cap \Lambda$
and 
\eqref{def:b_k} holds for any $b \in B(b_k^i, \rho(b_k^i)) \cap \Lambda$.
Without loss of generality, we may assume
$a_k^i=(0,\dots,0)$ and $b_k^i=(b_1,0,\dots,0)$. 
Let us now consider the lines $l_d:=(t,d_2,\dots,d_n)$ connecting $B(a_k^i, \rho(a_k^i))$ and $B(b_k^i, \rho(b_k^i))$.
Since $\Lambda$ is of full measure, for $\mathcal{L}^{n-1}$-a.e.~$d:=(d_2,\dots,d_n)$ a line $l_{d}$ contains $x_{a} \in B(a_k^i, \rho(a_k^i)) \cap \Lambda$ and $x_{b} \in B(b_k^i, \rho(b_k^i)) \cap \Lambda$, and
$\mathcal{L}^1(l_d \setminus \Lambda)=0$.
Moreover, $ \tilde{f}_i\in W^{1,n-1}$ and hence $ \tilde{f}_i$ is absolutely continuous on $\mathcal{L}^{n-1}$-a.e.\ $l_{d}$.
Therefore, by the intermediate value property, there is a point $c_k^i\in l_{d}$ such that
\begin{equation}\label{eq:c_k}
    \left| \tilde{f}_i(c_k^i)-\frac{1}{|B(x',r_k)|}\int_{B(x',r_k)\cap\Lambda}  \tilde{f}_i(x) \dx \right|\leq 2^{-k}|x'-x_0|.
\end{equation}
Moreover, there exists $x_k^i\in l_d\cap \Lambda \subset B(x',r_k)$ such that
\begin{equation}\label{neq:fc_k-fx_k}
    | \tilde{f}(c_k^i) -  \tilde{f}(x_k^i)|\leq 2^{-k}|x'-x_0|.
\end{equation}
Then, by \eqref{neq:Lebesgue_point}, \eqref{eq:c_k}, and \eqref{neq:fc_k-fx_k},
\begin{equation}\label{neq:fx_k-fx'}
    | \tilde{f}_i(x_k^i)-\tilde{f}_i(x')|\leq | \tilde{f}_i(x_k^i)- \tilde{f}_i(c_k^i)|+| \tilde{f}_i(c_k^i)-\tilde{f}_i(x')|<2^{-k+2}|x'-x_0|.
\end{equation}

Further,
\begin{equation}\label{neq:stepanov1}
\begin{aligned}
    &\frac{|\tilde{f}_i(x')-\tilde{f}_i(x_0)-Df_i(x_0)(x'-x_0)|}{|x'-x_0|}\\
    &\qquad \leq \frac{|\tilde{f}_i(x')- \tilde{f}_i(x_k^i)|+|Df_i(x_0)(x'-x_k^i) |}{|x'-x_0|}+\frac{| \tilde{f}_i(x_k^i)-\tilde{f}_i(x_0) -  Df_i(x_0)(x_k^i-x_0)|}{|x'-x_0|}.\\
\end{aligned}
\end{equation}
Since $x_k^i\in B(x',r_k)$ and \eqref{neq:fx_k-fx'} holds, the first term in \eqref{neq:stepanov1} can be estimated as
\begin{equation*}
    \frac{|\tilde{f}_i(x')- \tilde{f}_i(x_k^i)|+|Df_i(x_0)(x'-x_k) |}{|x'-x_0|} \leq 
    2^{-k+2}+ 2^{-k}|Df(x_0)|. 
\end{equation*}
While to estimate the second term in \eqref{neq:stepanov1}, we note that 
\begin{equation*}
    |x_k^i-x_0| \leq |x_k^i-x'|+|x'-x_0|\leq (1+2^{-k})|x'-x_0| \leq 2 |x'-x_0| \leq s,
\end{equation*}
since $x_k^i\in B(x',r_k)$.
And hence, by \eqref{neq:diff_in_lambda}, we conclude
\begin{equation*}
    \frac{| \tilde{f}_i(x_k^i)-\tilde{f}_i(x_0) -  Df_i(x_0)(x_k^i-x_0)|}{|x'-x_0|} 
    \leq \frac{2| \tilde{f}_i(x_k^i)-\tilde{f}_i(x_0) -  Df_i(x_0)(x_k^i-x_0)|}{|x_k^i-x_0|}
    \leq \varepsilon.
\end{equation*}

Summarizing the above, we obtain that for $x_{0}\in \Lambda$ and any $\varepsilon >0$ there exists $s>0$ such that for any $x'\in B\left(x_0, \frac{s}{2}\right)$ it holds
\begin{equation*}
    \frac{|\tilde{f}_i(x')-\tilde{f}_i(x_0)-Df_i(x_0)(x'-x_0)|}{|x'-x_0|}\leq  \liminf \limits_{k\to \infty}(2^{-k} (4+ |Df_i(x_0)|)+\varepsilon) = \varepsilon.
\end{equation*}
Therefore, $\tilde{f}_i$ is differentiable in any $x_{0}\in\Lambda$ with respect to $\Omega$ and, moreover, 
$D\tilde{f}_i(x_0) = Df_i|_{\Lambda}(x_0)$.

\end{proof}

\vspace{-1.5\baselineskip}
\section{Proofs of Theorem \ref{thm:main} and Corollary \ref{corollary_application}}

\begin{proof}[Proof of Theorem \ref{thm:main}]
Theorem \ref{thm:main} immediately follows from Lemma~\ref{lemma:injectivity} and Theorem~\ref{thm:diff}.
\end{proof}

\begin{proof}[Proof of Corollary \ref{corollary_application}]
    Let us first note that following the proof of \cite[Theorem 1.1]{MosPas2014} with substituting $n$ by $p$, we obtain 
    \[
        \int_{\Omega'} |D f_k^{-1}|^p (y)\,dy\leq\int_\Omega \frac{|\adj Df_k|^p (x)}{(J_{f_k}(x))^{p-1}}\,dx.
    \]
    Hence, $\cE(f_k) \leq \cF(f_k)$ and the sequence $\{f_k^{-1}\}_{k\in\mathbb{N}}$ is bounded in $W^{1,p}(\Omega',\rn)$ and passing to a subsequence if needed, there exists a weak limit $h$. 
    Moreover, by \cite[Lemma 2.3]{DolHenMol2022} and~\eqref{varphi}, the inequality
    \[
        \int_{\Omega}\varphi(J_f (x))\dx \leq C
    \]
guarantees that $J_{f} > 0$ a.e.\ in $\Omega$
and $J_{h} > 0$ a.e.\ in $\Omega'$. 
To finish the proof, we apply Theorem~\ref{thm:main}.
\end{proof}

\section*{Acknowledgement}
We want to express their appreciation to Professor Stanislav Hencl for attracting our attention to the problem and for many fruitful discussions.

\bibliographystyle{plain}
\bibliography{biblio_inv}

\end{document}